\documentclass{amsart}
\usepackage{graphicx}
\usepackage{amssymb,amscd,amsthm,amsxtra}
\usepackage{latexsym}
\usepackage{epsfig}

\vfuzz2pt 
\hfuzz2pt 
\newtheorem{thm}{Theorem}[section]
\newtheorem{cor}[thm]{Corollary}
\newtheorem{lem}[thm]{Lemma}
\newtheorem{prop}[thm]{Proposition}
\theoremstyle{definition}
\newtheorem{defn}[thm]{Definition}
\theoremstyle{remark}

\numberwithin{equation}{section}

\newcommand{\R}{\mathbb R}

\newcommand{\eps}{\epsilon}

\newcommand{\p}{\partial}

\newcommand{\comment}[1]{}

\begin{document}

\title[A localization property]{A localization property at the boundary for Monge-Ampere equation}
\author{O. Savin}
\address{Department of Mathematics, Columbia University, New York, NY 10027}
\email{\tt  savin@math.columbia.edu}

\thanks{The author was partially supported by NSF grant 0701037.}

\maketitle

\section{Introduction}

In this paper we study the geometry of the sections for solutions to the Monge-Ampere equation $$\det D^2u=f, \quad \quad u:\overline \Omega \to \mathbb{R}\quad \mbox{convex},$$
which are centered at a boundary point $x_0\in \partial \Omega$. We show that under natural local assumptions on the boundary data and the domain, the sections $$S_h(x_0)=\{x \in \overline \Omega | \quad u(x)<u(x_0)+\nabla u(x_0) \cdot (x-x_0)+h\}$$ are ``equivalent" to ellipsoids centered at $x_0$, that is, for each $h>0$ there exists an ellipsoid $E_h$ such that $$cE_h \cap \overline \Omega \subset \, S_h(x_0) -x_0 \subset \, CE_h \cap \overline \Omega,$$
with $c$, $C$ constants independent of $h$.

The situation in the interior is well understood. Caffarelli showed in \cite{C1} that if $$0<\lambda \leq f \leq \Lambda \quad \text{in $\Omega$},$$ and for some $x \in \Omega$, $$S_h(x)\subset \subset \Omega,$$
then $S_h(x)$ is equivalent to an ellipsoid centered at $x$ i.e.
$$kE \subset S_h(x)-x \subset k^{-1} E$$ for some ellipsoid $E$ of volume $h^{n/2}$ and for a constant $k>0$ which depends only on $\lambda, \Lambda, n.$

This property provides compactness of sections modulo affine transformations. This is particularly useful when dealing with interior $C^{2,\alpha}$ and $W^{2,p}$ estimates of strictly convex solutions of $$\det D^2u=f$$ when $f>0$ is continuous (see \cite{C2}).

Sections at the boundary were also considered by Trudinger and Wang in \cite{TW} for solutions of $$\det D^2u = f $$ but under stronger assumptions on the boundary behavior of $u$ and $\p \Omega$, and with $f\in C^\alpha(\overline \Omega).$ They proved $C^{2,\alpha}$ estimates up to the boundary by bounding the mixed derivatives and obtained that the sections are equivalent  to balls.

\section{Statement of the main Theorem.}

Let $\Omega$ be a bounded convex set in $\R^n$. We assume throughout this note that
\begin{equation}\label{om_ass}
B_\rho(\rho e_n) \subset \, \Omega \, \subset \{x_n \geq 0\} \cap B_{\frac 1\rho},
\end{equation}
for some small $\rho>0$, that is $\Omega \subset (\R^n)^+$ and $\Omega$ contains an interior ball tangent to $\p \Omega$ at $0.$

Let $u : \overline \Omega \rightarrow \R$ be convex, continuous, satisfying
\begin{equation}\label{eq_u}
\det D^2u =f, \quad \lambda \leq f \leq \Lambda \quad \text{in $\Omega$}.
\end{equation}
We extend $u$ to be $\infty$ outside $\overline \Omega.$

By subtracting a linear function we may assume that
\begin{equation}\label{eq_u1}
\mbox{$x_{n+1}=0$ is the tangent plane to $u$ at $0$,}
\end{equation}
in the sense that $$u \geq 0, \quad u(0)=0,$$
and any hyperplane $x_{n+1}= \eps x_n$, $\eps>0$ is not a supporting hyperplane for $u$.

In this paper we investigate the geometry of the sections of $u$ at $0$ that we denote for simplicity of notation
$$S_h := \{x \in \overline \Omega : \quad u(x) < h \}.$$

We show that if the boundary data has quadratic growth near $\{x_n=0\}$ then, as $h \rightarrow 0$, $S_h$ is equivalent to a half-ellipsoid centered at 0.

Precisely, our main theorem reads as follows.

\begin{thm}\label{main} Assume that $\Omega$, $u$ satisfy \eqref{om_ass}-\eqref{eq_u1} above and for some $\mu>0$,
\begin{equation}\label{commentstar}\mu |x|^2 \leq u(x) \leq \mu^{-1} |x|^2 \quad \text{on $\p \Omega \cap \{x_n \leq \rho\}.$}\end{equation}
Then, for each $h<c(\rho)$ there exists an ellipsoid $E_h$ of volume $h^{n/2}$ such that
$$kE_h \cap \overline \Omega \, \subset \, S_h \, \subset \, k^{-1}E_h .$$

Moreover, the ellipsoid $E_h$ is obtained from the ball of radius $h^{1/2}$ by a
linear transformation $A_h^{-1}$ (sliding along the $x_n=0$ plane)
$$A_hE_h= h^{1/2}B_1$$
$$A_h(x) = x - \nu x_n, \quad \nu = (\nu_1, \nu_2, \ldots, \nu_{n-1}, 0), $$
with
$$ |\nu| \leq k^{-1} |\log h|.$$
The constant $k$ above depends on $\mu, \lambda, \Lambda, n$ and $c(\rho)$ depends also on $\rho$.
\end{thm}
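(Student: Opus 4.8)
The strategy is to run a compactness/iteration argument that upgrades the crude information coming from the quadratic boundary bound \eqref{commentstar} into the sharp statement that sections are equivalent to ellipsoids, while simultaneously tracking the geometry of the normalizing affine maps $A_h$.

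\medskip

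\emph{Step 1: Crude two-sided bounds on $S_h$.}
First I would show that $S_h$ has comparable size in all directions only up to logarithmic factors: there exist constants depending on $\mu,\lambda,\Lambda,n$ such that
$$
B_{c h^{1/2}}^+ \subset S_h \subset B_{C h^{1/2}|\log h|}^+,
$$
after an initial normalization. The lower bound on how far $S_h$ reaches in the $e_n$ direction and the upper bound on its tangential extent are obtained by comparison with explicit barriers: on the one hand the function $\mu^{-1}|x|^2$ minus a small multiple of $x_n$ (using \eqref{commentstar} on $\p\Omega\cap\{x_n\le\rho\}$ and $\det D^2 u \ge \lambda$), on the other hand a barrier of the form $A|x'|^2 + B(x_n - \delta)x_n$ that is $\ge h$ on $\p S_h$ and has the right Monge--Amp\`ere mass. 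The logarithmic loss is genuine and reflects the sliding map $A_h$; it will be controlled precisely in Step 3. The normalization is: replace $u$ by $u\circ T^{-1}/h$ where $T$ is chosen so that the John ellipsoid of $S_h$ is balanced, keeping $\{x_n=0\}$ fixed.

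\medskip

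\emph{Step 2: Compactness and the limiting profile.}
Consider the rescalings $u_h(x) = u(A_h^{-1} h^{1/2} x)/h$ on the rescaled domains $\Omega_h$. From Step 1 these are convex solutions of $\det D^2 u_h \in [\lambda,\Lambda]$ on domains trapped between fixed balls, with $0$ on the boundary, tangent plane $\{x_{n+1}=0\}$, and boundary data pinched between $\mu|x|^2$ and $\mu^{-1}|x|^2$ (these hypotheses are scale-invariant under the sliding maps $A_h$, which is the reason for allowing exactly this class of affine transformations in the statement). Along a subsequence $h\to 0$ the domains converge and $u_h$ converges locally uniformly to a convex function $u_0$ on a limiting domain, solving $\det D^2 u_0 \in [\lambda,\Lambda]$ with quadratic boundary data on a flat piece of boundary. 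The key structural fact to extract is that for such a limit the section $\{u_0<1\}$ \emph{is} equivalent to a half-ellipsoid, and more importantly $u_0$ is strictly convex at $0$ in the tangential directions, so that $\{u_0<t\}$ does not degenerate as $t\to 0$. This rules out the scenario in which $S_h$ becomes infinitely long and thin faster than logarithmically.

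\medskip

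\emph{Step 3: Iteration and control of $\nu$.}
With the compactness in hand, I would set up a dyadic iteration: comparing the normalized shape of $S_h$ with that of $S_{h/2}$, Step 2 gives that the affine map relating them is close to the identity, with an error that is summable after taking logs. Concretely, writing $A_h(x)=x-\nu(h)x_n$, the iteration yields $|\nu(2^{-k-1}) - \nu(2^{-k})| \le k^{-1}(\text{small})$ at each stage — actually one only gets a uniformly bounded increment, which is exactly why $|\nu(h)|\lesssim |\log h|$ rather than a constant — and $|\det A_h|=1$ so that $\mathrm{vol}(E_h)=h^{n/2}$. Assembling these increments and feeding the result back into the barriers of Step 1 sharpens the two-sided inclusion to $kE_h\cap\overline\Omega \subset S_h \subset k^{-1}E_h$ with $E_h = A_h^{-1}(h^{1/2}B_1)$.

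\medskip

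\emph{Main obstacle.}
The crux is Step 2: proving that the limiting solution $u_0$ with flat boundary and quadratic boundary data is strictly convex and has non-degenerate sections at the boundary point. In the interior this is Caffarelli's theorem, but at the boundary the usual strict convexity arguments (sliding planes, localization of the contact set) must be redone using the geometry of $\p\Omega$ near $0$ and the precise quadratic pinching; the danger is a limiting solution whose graph contains a segment in the boundary hyperplane emanating from $0$, which would destroy the ellipsoid equivalence. Ruling this out — presumably by a barrier argument exploiting that $u_0 \ge \mu|x|^2$ on the flat boundary forces enough Monge--Amp\`ere mass to prevent flatness — is where the real work lies, and it is also where the logarithmic bound on $|\nu|$ (as opposed to a uniform bound) gets forced upon us.
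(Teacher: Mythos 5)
Your overall architecture (compactness plus a dyadic iteration tracking the sliding map $A_h$) is in the same family as the paper's, but the proposal has a genuine gap and a circularity, and it misses the mechanism that actually makes the argument close.

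\emph{The a priori bound in Step~1 is not available.}
You claim that barriers give $S_h\subset B_{Ch^{1/2}|\log h|}^+$ after normalization, i.e.\ that $S_h$ is nondegenerate up to logarithmic factors. What barriers actually give (Proposition~\ref{TW} and the remarks following it in the paper) is the far weaker estimate $d_n\ge c(\rho)h^{n/(n+1)}$ for the $e_n$-thickness of the normalized section, together with $d_i\ge ch^{1/2}$ tangentially and $\prod d_i\sim h^{n/2}$. For small $h$, $h^{n/(n+1)}\ll h^{1/2}|\log h|$, so the section could a priori degenerate polynomially, not logarithmically. Upgrading $h^{n/(n+1)}$ to $h^{1/2}$ is precisely the content of the key Lemma~\ref{l1} of the paper; your Step~1 asserts (a near-equivalent of) the conclusion as if it were an input.

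\emph{Step~2 is circular as stated.}
You assert that the rescalings $u_h(x)=h^{-1}u(A_h^{-1}h^{1/2}x)$ have boundary data ``pinched between $\mu|x|^2$ and $\mu^{-1}|x|^2$'' on a boundary patch, and that this pinching is ``scale-invariant under the sliding maps.'' It is not. After normalizing $S_h$ to unit size with the matrix coming from John's lemma, the quadratic boundary pinching becomes $\frac{\mu}{2}\sum a_i^2 x_i^2\le v\le 2\mu^{-1}\sum a_i^2 x_i^2$ (equation~\eqref{ai}), where the coefficients $a_i$ can be unboundedly large precisely when $d_n\ll h^{1/2}$. So the class in which you must carry out the compactness argument is the \emph{degenerate} class in which some tangential quadratic coefficients blow up; asserting bounded coefficients is exactly equivalent to asserting what you want to prove. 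This is why the paper does not work with the limit $u_0$ of normalized solutions directly, but instead with the scale-invariant quantity $b(h)=h^{-1/2}\sup_{S_h}x_n$ and proves the doubling statement (Lemma~\ref{l2}): if $b(h)\le c_0$ then $b(th)\ge 2b(h)$ for some $t\in[c_0,1]$, which forces $b$ to stay bounded below.

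\emph{The missing mechanism.}
Your ``main obstacle'' paragraph correctly identifies that one must rule out a degenerate limit, but the proposal offers no way to do it beyond ``a barrier argument.'' The actual proof requires (a)~a careful notion of boundary values that survives limits (Definitions~\ref{bv1}--\ref{bv2}, Lemma~\ref{last}, Proposition~\ref{comp}); and crucially (b)~an induction on the number $k$ of blown-up coefficients $\alpha_i$ in the boundary pinching (Proposition~\ref{HDprop}). The base case $k=1$ is an explicit barrier in quasi-polar coordinates; the inductive step uses a second normalization inside the section, Lemma~\ref{ball} to show the renormalized section is a ball, and Lemma~\ref{lastlem} — the ``rotating cone'' lemma $u\ge p(|z|-qx_n)\Rightarrow u\ge p'(|z|-(q-\eta)x_n)$ — iterated finitely many times to produce a contradiction with the volume lower bound $|S_h|\ge\mu h^{n/2}$. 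Without some substitute for this induction, a single application of compactness does not close the argument in dimensions $n\ge 3$, because the limiting solution can live in a mixed class in which some tangential directions are degenerate and some are not.

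\emph{What is fine.}
Your Step~3, deducing $|\nu_h|\lesssim|\log h|$ by summing bounded increments $|\nu_h-\nu_{h/2}|\le C$, is exactly how the paper finishes once Lemma~\ref{l1} is in hand; but it is a consequence of the hard lemma, not a feedback loop that sharpens Step~1. Your remark that the log bound is ``forced upon us'' by the strict convexity step misattributes its origin: it comes from the dyadic accumulation of increments, not from the compactness argument.
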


Theorem \ref{main} is new even in the case when $f=1$. The ellipsoid $E_h$, or equivalently the linear map $A_h$, provides information about the behavior of the second derivatives near the origin. Heuristically, the theorem states that in $S_h$ the tangential second derivatives are bounded from above and below and the mixed second derivatives are bounded by $|\log h|$. This is interesting given that $f$ is only bounded and the boundary data and $\p \Omega$ are only $C^{1,1}$ at the origin.

\

\textbf{Remark.} Given only the boundary data $\varphi$ of $u$ on $\p \Omega$, it is not always easy to check condition \eqref{commentstar}. Here we provide some examples when \eqref{commentstar} is satisfied:

\

1) If $\varphi$ is constant and the domain $\Omega$ is included in a ball included in $\{x_n \geq 0\}.$

 \

 2) If the domain $\p \Omega$ is tangent of order 2 to $\{x_n=0\}$ and the boundary data $\varphi$ has quadratic behavior in a neighborhood of 0.

 \

3) $\varphi$, $\p \Omega \in C^3$ at the origin, and $\Omega$ is uniformly convex at the origin.

\

We obtain  compactness of sections modulo affine transformations.

\begin{cor} Under the assumptions of Theorem \ref{main}, assume that
$$\lim _{x \rightarrow 0} f(x) = f(0)$$ and
$$u(x) = P(x) + o(|x|^2) \quad \mbox {on $\p \Omega$}$$ with $P$ a quadratic polynomial. Then we can find a sequence of rescalings $$\tilde u_h(x) : = \frac 1 h u(h^{1/2} A_h^{-1} x)$$ which converges to a limiting continuous solution $\bar u_0 : \overline \Omega_0 \rightarrow \R$ with $$kB_1^+ \subset \Omega_0 \subset k^{-1}B_1^+$$ such that
$$\det D^2 \bar u_0 = f(0)$$ and
\begin{align*}
& \bar u_0 = P \quad \text{on $\overline \Omega_0 \cap \{x_n=0\}$,}\\
& \bar u_0 =1 \quad \text{on $\p \overline \Omega_0 \cap \{x_n>0\}.$}
\end{align*}

\end{cor}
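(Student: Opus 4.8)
The plan is to derive the Corollary as a compactness consequence of Theorem~\ref{main}. First I would use the quantitative conclusion of Theorem~\ref{main} to control the rescalings. Setting $T_h(x) = h^{1/2}A_h^{-1}x$, the section $S_h$ is trapped between $kE_h\cap\overline\Omega$ and $k^{-1}E_h$, and since $A_hE_h = h^{1/2}B_1$, the normalized domain $\Omega_h := T_h^{-1}(S_h)$ satisfies $k B_1^+ \subset \Omega_h \subset k^{-1}B_1$, with the flat part of $\p\Omega_h$ lying in $\{x_n=0\}$ because $A_h$ slides along that hyperplane and fixes it. The function $\tilde u_h(x) = \frac1h u(T_h x)$ is convex, vanishes at $0$, equals $1$ on the ``curved'' part $\p\Omega_h\cap\{x_n>0\}$ by definition of the section, and solves $\det D^2\tilde u_h = f(T_h x) =: f_h(x)$ with $\lambda \le f_h \le \Lambda$, since $\det D^2 T_h = \det(h^{1/2}A_h^{-1}) = h^{n/2}\cdot 1$ exactly cancels the $h^{-1}$ scaling in $n$ dimensions.

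Next I would extract a convergent subsequence. The domains $\Omega_h$ are uniformly bounded convex sets containing $kB_1^+$, so along a subsequence $h\to 0$ they converge in the Hausdorff distance to a convex set $\Omega_0$ with $kB_1^+\subset\Omega_0\subset k^{-1}B_1^+$ (the upper inclusion into the half-ball uses $\Omega_h\subset\{x_n\ge 0\}$). The functions $\tilde u_h$ are convex, bounded between $0$ and $1$ on sets of comparable size, hence uniformly Lipschitz on compact subsets of the interior and, by convexity together with the quadratic boundary bound inherited from \eqref{commentstar}, equicontinuous up to the boundary; so a further subsequence converges locally uniformly to a convex function $\bar u_0$ on $\overline\Omega_0$. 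Since $f_h \to f(0)$ locally uniformly away from the flat boundary (using $\lim_{x\to0}f(x)=f(0)$ and $|T_hx|\le k^{-1}h^{1/2}|\log h|\to 0$), stability of Alexandrov solutions of the Monge--Ampère equation under uniform convergence gives $\det D^2\bar u_0 = f(0)$ in $\Omega_0$.

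Finally I would identify the boundary values. On the curved part $\p\Omega_0\cap\{x_n>0\}$ the value $1$ passes to the limit from $\tilde u_h\equiv 1$ there, with a barrier argument (using the interior ball and the uniform bounds on $f$) to rule out loss of the boundary condition. On the flat part $\overline\Omega_0\cap\{x_n=0\}$ I would rescale the hypothesis $u(x) = P(x)+o(|x|^2)$ on $\p\Omega$: for $x\in\p\Omega_h\cap\{x_n=0\}$ one has $T_hx = h^{1/2}x$ (since $A_h$ fixes $\{x_n=0\}$ pointwise when restricted to vectors with $x_n=0$), so $\tilde u_h(x) = h^{-1}\bigl(P(h^{1/2}x) + o(h|x|^2)\bigr) = P(x) + o(1)$ using that $P$ is homogeneous of degree $2$; hence $\bar u_0 = P$ on $\overline\Omega_0\cap\{x_n=0\}$. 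The main obstacle is the up-to-the-boundary control: showing the $\tilde u_h$ are genuinely equicontinuous at the flat boundary and that the limit attains the prescribed boundary data rather than jumping — this requires constructing suitable convex sub- and super-barriers that are stable under the rescaling, and it is precisely here that the uniform bounds from Theorem~\ref{main} (the shape of $E_h$ and the logarithmic bound on $|\nu|$) are essential.
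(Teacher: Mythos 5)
The paper states this Corollary but does not include a proof; the relevant machinery is what the paper develops later in Section~4 (Definitions~\ref{bv1}--\ref{bv2}, Lemma~\ref{last}, Proposition~\ref{comp}), since those results were built exactly for this kind of blow-up argument. Your proposal takes the right overall route: normalize with $T_h=h^{1/2}A_h^{-1}$, note that $\det D^2\tilde u_h=f(T_hx)$ because $\det A_h=1$, use Theorem~\ref{main} to pin $\Omega_h$ between $kB_1^+$ and $k^{-1}B_1^+$, extract a Hausdorff limit, and pass to the limit in the equation and the boundary data. You have also correctly identified that the flat boundary piece comes from $A_h$ fixing $\{x_n=0\}$ and that $\frac1h P(h^{1/2}\cdot)$ reproduces $P$ there.

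The weak point is the step where you assert the $\tilde u_h$ are ``equicontinuous up to the boundary'' and that the boundary data is preserved via barriers. In general this is false for convex solutions with boundary data that degenerates or jumps in the limit: interior uniform convergence of convex functions does \emph{not} by itself prevent a drop of boundary values, and equicontinuity at the boundary is not a consequence of convexity plus the quadratic bound. This is precisely why the paper introduces boundary values via the closure of the upper graph (Definition~\ref{bv1}), proves via the Alexandrov estimate that the limit boundary data is the convex envelope $\varphi^*$ of the pointwise limit (Lemma~\ref{last}), and packages this as Proposition~\ref{comp}. The correct argument for the Corollary should therefore invoke Proposition~\ref{comp} to get $\bar u_0=\varphi^*$ with $\varphi=P$ on $\overline\Omega_0\cap\{x_n=0\}$ and $\varphi=1$ on the curved part, and then check separately that $\varphi^*=\varphi$; this uses that $P$ restricted to $\{x_n=0\}$ is a positive-definite quadratic form (forced by \eqref{commentstar}) and that the data is consistent at the corner where the flat and curved parts meet (which holds because $\tilde u_h\to1$ along the relative boundary of the rescaled flat piece). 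Your barrier idea can be turned into the Alexandrov-estimate argument, but as written it glosses over the key point. A secondary imprecision: you invoke that $P$ is homogeneous of degree two; \eqref{commentstar} only forces the tangential linear part of $P$ to vanish, so one should either verify that the normal linear term plays no role on the rescaled flat boundary or state this explicitly.
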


In a future work we intend to use the results above and obtain $C^{2, \alpha}$ and $W^{2,p}$ boundary estimates under appropriate conditions on the domain and boundary data.

\section{Preliminaries}

Next proposition was proved by Trudinger and Wang in \cite{TW}. Since our setting is slightly different we provide its proof.

\begin{prop}\label{TW} Under the assumptions of Theorem \ref{main}, for all $h \leq c(\rho),$ there exists a linear transformation (sliding along $x_n=0$) $$A_h(x) = x - \nu x_n,$$ with$$ \nu_n=0, \quad |\nu|\leq C(\rho) h^{-\frac{n}{2(n+1)}}$$ such that the rescaled function
$$\tilde u(A_h x) = u(x),$$ satisfies in $$\tilde S_h := A_h S_h = \{\tilde u<h\}$$ the following:
\begin{enumerate}
\item the center of mass of $\tilde S_h$ lies on the $x_n$-axis;
\item $$k_0 h^{n/2} \leq |\tilde S_h| = |S_h| \leq k_0^{-1} h^{n/2};$$
\item the part of $\p \tilde S_h$ where $\{\tilde u <h\}$ is a graph, denoted by $$\tilde G_h = \p \tilde S_h \cap \{\tilde u <h\} = \{(x', g_h(x'))\}$$ that satisfies $$g_h \leq C(\rho)|x'|^2$$ and $$\frac \mu 2 |x'|^2 \leq \tilde u \leq 2\mu^{-1} |x'^2| \quad \text{on $\tilde G_h$}.$$
\end{enumerate}

The constant $k_0$ above depends on $\mu, \lambda, \Lambda, n$ and the constants $C(\rho), c(\rho)$ depend also on $\rho$.
\end{prop}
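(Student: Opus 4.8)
The plan is to run the standard John-ellipsoid normalization of $S_h$ and then read off the three assertions; the only substantial inputs are the two-sided volume bound and the quantitative control of how far $S_h$ is slanted away from the $e_n$-direction.

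First, apply John's lemma to the bounded convex set $S_h$: let $\mathcal E$ be its maximal-volume ellipsoid, with center $z_0$, so that $z_0+\mathcal E\subseteq S_h\subseteq z_0+n\mathcal E$, and let $T$ be the affine map normalizing $z_0+\mathcal E$ to the unit ball. Put $S^\ast:=T(S_h)$ (so $B_1\subseteq S^\ast\subseteq B_n$) and $v(y):=h^{-1}u(T^{-1}y)$, which is convex, satisfies $0\le v<1$ in $S^\ast$, equals $1$ on the part of $\partial S^\ast$ not coming from $\partial\Omega$, and solves $\det D^2v=\gamma f$ with $\gamma=|\det DT|^{-2}h^{-n}\simeq |S_h|^2h^{-n}$. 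The estimate $|S_h|\le C(n,\lambda)h^{n/2}$, i.e. $\gamma\le C$, is the easy half: since $0\le v\le1$ on $B_1$, every $p\in\partial v(B_{1/2})$ has $|p|\le2$, hence $\lambda\gamma|B_{1/2}|\le\int_{B_{1/2}}\det D^2v=|\partial v(B_{1/2})|\le\omega_n2^n$. The matching lower bound $|S_h|\ge c\,h^{n/2}$ requires $|\partial v(S^\ast)|$ bounded below: the sliding-plane (cone) comparison gives this as soon as $v$ stays away from $0$ on $\partial S^\ast$, and on the portion $T(\partial\Omega\cap S_h)$ this is exactly where the \emph{lower} bound in \eqref{commentstar} is used — $u\ge\mu|x|^2$ on $\partial\Omega$ confines $\partial\Omega\cap S_h$ to $\overline B_{\sqrt{h/\mu}}(0)$ and makes $v$ quadratically large there, while the already-proven upper volume bound keeps $T$ from being too degenerate near $0$; together these show $\min_{\overline{S^\ast}}(v-\langle p,\cdot\rangle)$ is interior whenever $|p|$ is below a fixed constant, so $\partial v(S^\ast)$ contains a fixed ball.

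Granting the volume bound, assertion (2) is immediate ($\det A_h=1$). For (1) and the form of $A_h$: since $S_h\subseteq\{x_n\ge0\}$ has positive measure its center of mass $z$ has $z_n>0$, so $\nu:=z'/z_n$ has $\nu_n=0$ and $A_h(x)=x-\nu x_n$ maps $z$ onto the $x_n$-axis, hence maps the center of mass of $S_h$ to that of $\tilde S_h$, which therefore lies on the $x_n$-axis. For (3): \eqref{om_ass} gives $0\le x_n\le\rho^{-1}|x'|^2$ on $\partial\Omega$ near $0$, and together with the upper bound in \eqref{commentstar} this shows that $\partial\Omega\cap S_h$ is a graph $x_n=g^{\mathrm{old}}(x')$ with $g^{\mathrm{old}}\le\rho^{-1}|x'|^2$ over a set squeezed between $\{|x'|<c(\mu)\sqrt h\}$ and $\{|x'|\le\sqrt{h/\mu}\}$, on which $\mu|x'|^2\le u\le2\mu^{-1}|x'|^2$ for $h$ small (absorbing $x_n^2\le C(\rho)|x'|^4$). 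Applying $A_h$ displaces a point of this graph by $|\nu|x_n\le|\nu|\rho^{-1}|x'|^2$, which by the bound on $|\nu|$ below is $\le\tfrac12|x'|$ once $h<c(\rho)$; hence $\tilde G_h$ is still a graph with $g_h\le C(\rho)|x'|^2$, and the two-sided quadratic estimate transports to $\tfrac\mu2|x'|^2\le\tilde u\le2\mu^{-1}|x'|^2$ on $\tilde G_h$.

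The remaining, and hardest, point is $|\nu|=|z'|/z_n\le C(\rho)h^{-n/(2(n+1))}$ — equivalently, that $S_h$ cannot slant too much against $\{x_n=0\}$, which John's lemma alone does not see but which is forced by the equation together with the boundary contact \eqref{om_ass}, \eqref{commentstar}. The inclusion $\mathrm{proj}_{x'}S_h\supseteq B'_{c(\mu)\sqrt h}$ together with the upper volume bound already gives $\sup_{S_h}x_n\le C\sqrt h$ (the cone from the top point of $S_h$ over the $\sqrt h$-sized base $\partial\Omega\cap S_h$ has volume $\gtrsim h^{(n-1)/2}\sup_{S_h}x_n$), so $z_n\le C\sqrt h$; the real work is bounding the tangential extent of $S_h$ from above and $z_n$ from below by $c(\rho)\sqrt h$, which I would do by comparing $u$ with an explicit subsolution of the shape $\tfrac\mu2|x'|^2+C(\mu,\Lambda,n)x_n^2$ on a lens-shaped region caught between $\partial\Omega$ and the interior sphere $\partial B_\rho(\rho e_n)$ — the exponent $n/(2(n+1))$ being precisely what this first comparison produces (its improvement to the logarithmic bound is the business of the later sections). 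I expect this barrier step — reconciling the affine normalization with the non-affine, quadratic contact of $\partial\Omega$ with $\{x_n=0\}$ at the single point $0$ — to be the main obstacle; the rest is bookkeeping around John's lemma and the comparison principle.
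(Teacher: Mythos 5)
The proposal stalls exactly where you expected it to, and what is missing is not a technical computation but the main mechanism of the proof.

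The key quantity to control is $x^*_h\cdot e_n$ (the $e_n$-component of the center of mass). You propose to bound it from below by $c(\rho)\sqrt h$ using a lower barrier of the form $\tfrac\mu2|x'|^2+Cx_n^2$ on a lens region. There are two problems. First, a \emph{lower} barrier $v\le u$ gives the inclusion $S_h\subseteq\{v<h\}$, i.e.\ an \emph{upper} constraint on $S_h$ of the form $|x'|^2\lesssim(\rho)(h+x_n)$; by itself it cannot produce a \emph{lower} bound on $x^*_h\cdot e_n$ (this is essentially \eqref{star} of the paper and it is the easy half). Second, the bound $x^*_h\cdot e_n\ge c(\rho)\sqrt h$ is strictly stronger than what Proposition~\ref{TW} asserts: the exponent here is $\alpha=n/(n+1)>1/2$, so the claim is only $x^*_h\cdot e_n\ge c(\rho)h^{\alpha}$, and the improvement to $\sqrt h$ is precisely the content of Lemma~\ref{l1}, proved by compactness over two sections — you cannot get it with a first-pass barrier. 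The paper's argument for $x^*_h\cdot e_n\ge c(\rho)h^{\alpha}$ is a proof by \emph{contradiction}: if the center of mass were too low, John's lemma forces $S_h$ into a thin slab $\{x_n\le h^\alpha\}\cap\{|x'|\le C_1h^{\alpha/2}\}$, and then a second, carefully scaled barrier $w=\eps x_n+\tfrac h2\left(\frac{|x'|}{C_1h^{\alpha/2}}\right)^2+\Lambda C_1^{2(n-1)}h\left(\frac{x_n}{h^\alpha}\right)^2$ is a subsolution lying below $u$ on $\partial S_h$, hence $u\ge w\ge\eps x_n$; this contradicts the \emph{strict} tangent-plane hypothesis \eqref{eq_u1} (no supporting plane $\eps x_n$). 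That hypothesis is the engine of the whole estimate and it never appears in your write-up. Without it, there is no mechanism that forbids $S_h$ from collapsing onto $\{x_n=0\}$.

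The lower volume bound in (ii) has a related circularity: you appeal to ``the already-proven upper volume bound keeps $T$ from being too degenerate near $0$'' to conclude that $\partial v(S^*)$ contains a fixed ball — but the degeneracy of $T$ in the $e_n$-direction is exactly what is at issue, and the upper volume bound alone does not control it. The paper again handles this by a barrier $w=\eps x_n+\sum ch(x_i/d_i)^2$ together with the strict tangent-plane condition, and that argument \emph{uses} the already-established bound $d_n\ge c(\rho)h^\alpha$ and part (iii) (to verify $w\le\tilde u$ on $\tilde G_h$). So (ii) depends on the contradiction step, not the other way around. The rest of your sketch (John normalization, the upper volume bound via the gradient image / Alexandrov estimate, the elementary treatment of (i) and of (iii) via the displacement estimate $|\nu|x_n\lesssim|\nu||x'|^2$) is sound and matches the paper's bookkeeping; but the lower bound on $x^*_h\cdot e_n$ via the strict tangent-plane contradiction is the one genuine idea in this proposition and it is missing.
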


\

 In this section we denote by $c$, $C$ positive constants that depend on $n$, $\mu$, $\lambda$, $\Lambda$. For simplicity of notation, their values may change from line to line whenever there is no possibility of confusion. Constants that depend also on $\rho$ are denote by $c(\rho)$, $C(\rho)$.

\begin{proof}
The function
$$v: = \mu |x'|^2 + \frac{\Lambda}{\mu^{n-1}} x_n^2 -C(\rho) x_n$$ is a lower barrier for $u$ in $\Omega \cap \{x_n \leq \rho\}$ if $C(\rho)$ is chosen large.

Indeed, then $$v \leq u \quad \text{on $\p \Omega \cap \{x_n \leq \rho\}$},$$

$$v \leq 0 \leq u \quad \text {on $\Omega \cap \{x_n=\rho\}$},$$
and
$$\det D^2 v > \Lambda.$$
In conclusion, $$v \leq u \quad \text{in $\Omega \cap \{x_n \leq \rho\}$},$$ hence
\begin{equation}\label{star} S_h \cap \{x_n \leq \rho\} \subset \{v <h\} \subset \{x_n > c(\rho)(\mu |x'|^2- h)\}.
\end{equation}

Let $x^*_h$ be the center of mass of $S_h.$ We claim that
\begin{equation}\label{2star}x^*_h \cdot e_n \geq c_0(\rho) h^{\alpha}, \quad \alpha= \frac{n}{n+1},\end{equation}
 for some small $c_0(\rho)>0$.

Otherwise, from \eqref{star} and John's lemma we obtain
$$S_h \subset \{x_n \leq C(n) c_0 h^{\alpha} \leq h^\alpha\} \cap \{|x'| \leq C_1 h^{\alpha/2}\},$$ for some large $C_1=C_1(\rho)$. Then the function
$$w= \eps x_n + \frac{h}{2} \left(\frac{|x'|}{C_1h^{\alpha/2}}\right)^2 + \Lambda C_1 ^{2(n-1)} h \left(\frac{x_n}{h^\alpha}\right)^2$$
is a lower barrier for $u$ in $S_h$ if $c_0$ is sufficiently small.

Indeed,
$$w \leq \frac h 4 + \frac h 2 + \Lambda C_1 ^{2(n-1)}(C(n)c_0 )^2 h < h \quad \text{in $S_h,$}$$
and for all small $h$,
$$w \leq \eps x_n + \frac{h^{1-\alpha}}{C_1 ^2} |x'|^2 + C(\rho)hc_0\frac{x_n}{h^\alpha} \leq \mu |x'|^2 \leq u \quad \text{on $\p \Omega$,}$$
and
$$\det D^2 w = 2\Lambda.$$

Hence $$w \leq u \quad \text{in $S_h$,}$$ and we contradict that 0 is the tangent plane at 0. Thus claim \eqref{2star} is proved.

Now, define $$A_h x = x - \nu x_n, \quad \nu = \frac{x^{*'}_h}{x_h^* \cdot e_n},$$
and
$$\tilde u(A_h x) = u(x).$$

The center of mass of $\tilde S_h=A_hS_h$ is $$\tilde x ^*_h=A_hx^*_h$$ and lies on the $x_n$-axis from the definition of $A_h$.
Moreover, since $x^*_h \in S_h$, we see from \eqref{star}-\eqref{2star} that
$$|\nu| \leq C(\rho) \frac{(x_h^*\cdot e_n)^{1/2}}{(x_h^*\cdot e_n)} \leq C(\rho) h^{-\alpha/2},$$
and this proves (i).

If we restrict the map $A_h$ on the set on $\p \Omega$ where $\{u < h\}$, i.e. on
$$\p S_h \cap \p \Omega \subset \{x_n \leq \frac{|x'|^2}{\rho}\} \cap \{|x'| < Ch^{1/2}\}$$
we have
$$|A_h x - x| = |\nu| x_n \leq C(\rho) h^{-\alpha/2} |x'|^2  \leq C(\rho) h^{\frac{1-\alpha}{2}} |x'|,$$
and part (iii) easily follows.

Next we prove (ii). From John's lemma, we know that after relabeling the $x'$ coordinates if necessary,
\begin{equation}\label{3star} D_h B_1 \subset \tilde S_h - \tilde x^*_h \subset C(n) D_h B_1\end{equation}where
\[
 D_h =
 \begin{pmatrix}
  d_1 & 0 & \cdots & 0 \\
  0 & d_{2} & \cdots & 0 \\
  \vdots  & \vdots  & \ddots & \vdots  \\
  0 & 0 & \cdots & d_{n}
 \end{pmatrix}.
\]

Since $$\tilde u \leq 2 \mu^{-1}|x'|^2 \quad \text{on $\tilde G_h = \{(x', g_h(x'))\}$},$$ we see that the domain of definition of $g_h$ contains a ball of radius $(\mu h/2)^{1/2}$. This implies that
$$d_i \geq c_1 h^{1/2}, \quad \quad i=1,\cdots, n-1,$$ for some $c_1$ depending only on $n$ and $\mu.$ Also from \eqref{2star} we see that $$\tilde x^*_h \cdot e_n =x^*_h \cdot e_n \ge c_0(\rho) h^\alpha$$ which gives $$d_n \ge c(n) \tilde x^*_h \cdot e_n \ge c(\rho) h^\alpha.$$

We claim that for all small $h$,
$$\prod_{i=1}^n d_i \geq k_0 h^{n/2},$$
with $k_0$ small depending only on $\mu, n, \Lambda,$ which gives the left inequality in (ii).

To this aim we consider the barrier,
$$w= \eps x_n + \sum_{i=1}^n ch\left(\frac{x_i}{d_i}\right)^2.$$ We choose $c$ sufficiently small depending on $\mu, n, \Lambda$ so that for all $h<c(\rho)$,
$$w \leq h \quad \text{on $\p \tilde S_h$,}$$
and on the part of the boundary $\tilde G_h$, we have $w \le \tilde u$ since
\begin{align*}w & \leq \eps x_n+\frac{c}{c_1^2}|x'|^2 + c h \left(\frac {x_n}{d_n}\right)^2 \\
 & \leq \frac \mu 4 |x'|^2 + c h C(n) \frac{x_n}{d_n} \\
 & \leq \frac \mu 4 |x'|^2 + c h^{1-\alpha}C(\rho)|x'|^2\\
 &\le \frac \mu 2 |x'|^2.
 \end{align*}
Moreover, if our claim does not hold, then
$$\det D^2 w = (2c h )^n (\prod d_i)^{-2n} > \Lambda,$$ thus $w \le \tilde u$ in $\tilde S_h$. By definition, $\tilde u$ is obtained from $u$ by a sliding along $x_n=0$, hence $0$ is still the tangent plane of $\tilde u$ at $0$. We reach again a contradiction since $\tilde u \ge w\ge \eps x_n$ and the claim is proved.

Finally we show that $$|\tilde S_h| \leq Ch^{n/2}$$ for some $C$ depending only on $\lambda, n.$ Indeed, if $$v=h \quad \text{on $\p \tilde S_h$},$$ and $$\det D^2v= \lambda$$ then
$$v \geq u \geq 0 \quad \text{in $\tilde S_h$.}$$ Since
$$h \geq h-\min_{\tilde S_h} v \geq c(n,\lambda) |\tilde S_h|^{2/n}$$ we obtain the desired conclusion.

\end{proof}

\

In the proof above we showed that for all $h \leq c(\rho),$ the entries of the diagonal matrix $D_h$ from \eqref{3star} satisfy
$$d_i \geq c h^{1/2}, \quad i=1,\ldots n-1$$

$$d_n \geq c(\rho)h^{\alpha}, \quad \alpha= \frac{n}{n+1}$$

$$c h^{n/2} \leq \prod d_i \leq Ch^{n/2}.$$

The main step in the proof of Theorem \ref{main} is the following lemma that will be proved in the remaining sections.

\begin{lem}\label{l1}
There exist constants $c$, $c(\rho)$ such that
\begin{equation}\label{dn}d_n \geq ch^{1/2},\end{equation}
for all $h \le c(\rho)$.
\end{lem}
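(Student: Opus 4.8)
\emph{Reformulation and base case.} Proposition~\ref{TW} (and the remarks following it) already give $c\,h^{n/2}\le \prod_{i=1}^n d_i\le C\,h^{n/2}$ and $d_i\ge c\,h^{1/2}$ for $i<n$. Hence $d_n\ge c\,h^{1/2}$ is \emph{equivalent} to the upper bounds $d_i\le C\,h^{1/2}$ for all $i<n$, i.e.\ to the boundedness of the eccentricity
$$ E(h):=\frac{\prod_{i<n} d_i(h)}{h^{(n-1)/2}}. $$
At a fixed height $h_0=c(\rho)$, Proposition~\ref{TW} together with $d_n\ge c(\rho)h^{\alpha}$ only yields $E(h_0)\le C(\rho)$, and the crude bound in fact deteriorates, $E(h)\le (C/c(\rho))\,h^{(1-n)/(2(n+1))}$, as $h\to 0$. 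So the content of the lemma is that $E$ must \emph{relax} below a $\rho$-independent constant; $E(h_0)\le C(\rho)$ will only be the starting value of an iteration. (For $n=1$ there is nothing to prove.)

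\emph{The iteration.} I would prove a one-step dichotomy: there is a universal constant $M_0$ so that, for all $h\le c(\rho)$,
$$ E(h)\le M_0 \ \Longrightarrow\ E(h/2)\le M_0, \qquad E(h)>M_0 \ \Longrightarrow\ E(h/2)\le \tfrac12\,E(h). $$
Granting this, from $E(h_0)\le C(\rho)$ one reaches $E\le M_0$ after at most $\sim\log C(\rho)$ halvings and then stays there, which gives $d_n\ge c\,h^{1/2}$ for every $h$ below a ($\rho$-dependent) threshold --- the lemma.

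\emph{The relaxation step.} To get the dichotomy I would argue by contradiction and barriers. If $E(h)$ is large then some tangential axis $d_j\gg h^{1/2}$, so $\tilde S_h$ is a thin pancake, elongated in $x_j$ and (by the volume bound) correspondingly thin in $x_n$. Using $\tilde u\ge \tfrac\mu2|x'|^2$ on the graph part $\tilde G_h$ and $\tilde u=h$ on the remaining, level-set part of $\partial\tilde S_h$, I would build a convex subsolution of the form
$$ w=\epsilon x_n+\sum_{i<n}\gamma_i x_i^2+\beta x_n^2,\qquad \gamma_i\sim \kappa\,h\,d_i^{-2},\quad \beta\sim \kappa\,h\,d_n^{-2}, $$
with $\kappa$ small, so that $\det D^2 w\ge\Lambda$ while $w\le\tilde u$ on all of $\partial\tilde S_h$ (on $\tilde G_h$ one uses $g_h\le C(\rho)|x'|^2$ and $h^{-1}d_i^2\ge c$; on the level-set part one uses $\tilde S_h\subset\{|x_i|\le Md_i\}$). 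The elongation is exactly what creates room: the larger $d_j$ is, the smaller the corresponding $\gamma_j$ may be taken while still keeping $\det D^2 w\ge\Lambda$ through the remaining factors, which is what lets the constraint $w\le\tfrac\mu2|x'|^2$ on $\tilde G_h$ be met. Then $w\le\tilde u$ in $\tilde S_h$ by comparison, and since $w\ge\epsilon x_n$, convexity of $\tilde u$ along segments issued from the origin (where $\tilde u(0)=0$) upgrades this to $\tilde u\ge\epsilon x_n$ on all of $\overline\Omega$, contradicting the normalization \eqref{eq_u1} that no plane $x_{n+1}=\epsilon x_n$ supports $\tilde u$. Promoting ``$E(h)$ large $\Rightarrow$ contradiction'' to the quantitative ``$\Rightarrow E(h/2)$ decreases by a definite factor'' is then done by applying such barriers to compare $\tilde S_{h/2}$ with $\tilde S_h$, together with Caffarelli's interior estimate on a sub-section of $\tilde S_h$ that is compactly contained and centered near the $x_n$-axis.

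\emph{Main obstacle.} The heart of the difficulty is precisely this relaxation estimate \emph{with constants independent of $\rho$}. The barriers must control the graph part $\tilde G_h$ all the way into the origin, where $\tilde u$ vanishes and the quadratic lower bound degenerates, while simultaneously staying below the level $h$ on the possibly very long level-set part of $\partial\tilde S_h$ and remaining strict subsolutions; these requirements pull $\kappa$, the $\gamma_i$ and $\beta$ in competing directions, and the admissible window tends to close up unless one also tilts the domain by a further sliding along $\{x_n=0\}$ at each scale. Controlling these slidings --- in particular proving that they are summable, which is simultaneously the origin of the $|\log h|$ bound in Theorem~\ref{main} --- and checking that the $\rho$-dependent error terms coming from $g_h\le C(\rho)|x'|^2$ are genuinely lower order for small $h$, is where the bulk of the technical work lies.
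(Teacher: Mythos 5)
There is a genuine gap. Your reformulation in terms of the eccentricity $E(h)\sim 1/b(h)$ and the halving iteration correctly capture the overall shape of the paper's Lemma~\ref{l2} (if $b(h)\le c_0$ then $b(th)\ge 2b(h)$ for some $t\in[c_0,1]$), but the proposed proof of the dichotomy via a quadratic barrier does not close, and in fact cannot be made to close by tuning constants. The key claim --- ``the elongation is exactly what creates room'' --- is false. With $w=\epsilon x_n+\sum_{i<n}\gamma_i x_i^2+\beta x_n^2$ and the maximal admissible choice $\gamma_i\sim \min\{\mu, h\,d_i^{-2}\}$, $\beta\sim h\,d_n^{-2}$ (forced by $w\le\tfrac{\mu}{2}|x'|^2$ on $\tilde G_h$ and $w\le h$ on the level part, using $\tilde S_h\subset\{|x_i|\le Cd_i\}$), one finds
$$\det D^2w\;\sim\;\prod_{i<n}\gamma_i\cdot\beta\;\sim\;\frac{h^n}{(\prod_i d_i)^2}\cdot(\text{const})\;\sim\;\text{const},$$
because Proposition~\ref{TW} already pins $\prod d_i\sim h^{n/2}$. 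Thus the determinant of your barrier is of a fixed order determined only by $\mu,\lambda,\Lambda,n$, \emph{independent of how elongated $\tilde S_h$ is}: when some $d_j$ grows, $\gamma_j$ shrinks in lockstep and $\beta$ grows in lockstep, and the product is unchanged. Whether this fixed constant exceeds $\Lambda$ is a rigid numerical question about $\mu,\lambda,\Lambda,n$ that is not under our control; in the delicate regime (small $\mu$, large $\Lambda$) the barrier is simply not a subsolution, and there is no contradiction to extract, let alone a quantitative improvement $E(h/2)\le\tfrac12E(h)$.

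The paper's proof is built to overcome precisely this rigidity, and it uses three ideas absent from your sketch. First, it recasts the problem in terms of $b(h)=h^{-1/2}\sup_{S_h}x_n$, normalizes the section, and reduces to a clean one--parameter family of model problems $\mathcal D^\mu_\sigma(\alpha_1,\dots,\alpha_{n-1})$ (with $\sigma$ the width of the boundary strip where $u<1$, and $\alpha_i$ the moduli of the boundary quadratic), then proves a \emph{compactness} theorem (Proposition~\ref{comp}, with the careful Definitions~\ref{bv1}--\ref{bv2} of boundary data under limits) and argues by contradiction: if the doubling fails, a limit solution in $\mathcal D^\mu_0(\dots,\infty,\dots)$ exists with $b$ uniformly small. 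Second, in the limit the boundary data degenerates ($\varphi=1$ off a lower--dimensional set, $\varphi=0$ on it), which opens the door to barriers of the form $v=r\,f(\theta)+\tfrac{1}{2M^2}x_n^2$ with $f(\theta)=\nu e^{C_0(\pi/2-\theta)}$; here $(f''+f)/f\sim C_0^2$ can be made \emph{arbitrarily large}, so the Monge--Amp\`ere determinant can be pushed above $\Lambda$ regardless of the fixed data $\mu,\lambda,\Lambda,n$ --- precisely the freedom that the quadratic barrier lacks. Third, the general case is handled by \emph{induction on the number of large $\alpha_i$} (Proposition~\ref{HDprop}), with the supporting Lemmas~\ref{ball} (the normalized section is comparable to a ball) and \ref{lastlem} (a conical lower bound for $u$ that improves by a fixed angle $\eta$ at each application), the latter two crucially invoking the induction hypothesis. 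This induction is the mechanism that lets one treat the hard case in which only one tangential axis is long and the others are of order $h^{1/2}$, which a single global barrier cannot reach. None of this structure is replaceable by the direct quadratic barrier comparison, and the appeal to ``Caffarelli's interior estimate on a sub-section'' does not supply the missing mechanism either, since the degeneracy here lives at the boundary.
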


Using Lemma \ref{l1} we can easily finish the proof of our theorem.

\

{\it Proof of Theorem \ref{main}.} Since all $d_i$ are bounded below by $c h^{1/2}$ and their product is bounded above by $Ch^{n/2}$ we see that
$$C h^{1/2} \geq d_i \geq ch^{1/2} \quad \quad i=1,\cdots,n$$ for all $h\le c(\rho)$. Using \eqref{3star} we obtain $$\tilde S_h \subset Ch^{1/2}B_1.$$ Moreover, since $$\tilde x^*_h \cdot e_n \ge d_n \ge c h^{1/2}, \quad \quad  (\tilde x^*_h)'=0,$$ and the part $\tilde G_h$ of the boundary $\p \tilde S_h$ contains the graph of $\tilde g_h$ above $|x'| \le c h^{1/2}$, we find that $$ch^{1/2}B_1 \cap \tilde \Omega \subset \tilde S_h,$$ with $\tilde \Omega=A_h \Omega$, $\tilde S_h=A_h S_h$. In conclusion
$$ch^{1/2}B_1 \cap \tilde \Omega \subset A_h S_h \subset Ch^{1/2}B_1.$$
We define the ellipsoid $E_h$ as
$$E_h:=A_h^{-1}(h^{1/2}B_1),$$
hence $$c E_h \cap \overline \Omega \subset S_h \subset C E_h.$$
Comparing the sections at levels $h$ and $h/2$ we find
$$cE_{h/2} \cap \overline \Omega \subset C E_h$$
and we easily obtain the inclusion
$$ A_hA_{h/2}^{-1} B_1 \subset C B_1.$$
If we denote $$A_hx=x-\nu_h x_n$$ then the inclusion above implies $$|\nu_h-\nu_{h/2}| \le C,$$ which gives the desired bound $$|\nu_h| \le C|\log h|$$ for all small $h$.

\qed

\

We introduce a new quantity $b(h)$ which is proportional to $d_n h^{-1/2}$ and which is appropriate when dealing with affine transformations.

\

\textbf{Notation.} Given a convex function $u$ we define $$b_u(h) = h^{-1/2} \sup_{S_h} x_n.$$ Whenever there is no possibility of confusion we drop the subindex $u$ and use the notation $b(h)$.

\

Below we list some basic properties of $b(h)$.

\

1) If $h_1 \le h_2$ then $$\left(\frac{h_1}{h_2}\right)^\frac 12 \le \frac{b(h_1)}{b(h_2)} \le \left(\frac{h_2}{h_1}\right)^\frac 12.$$

2) A rescaling $$ \tilde u (Ax) =u(X)$$ given by a linear transformation $A$ which leaves the $x_n$ coordinate invariant does not change the value of $b,$ i.e $$b_{\tilde u}(h)=b_u(h).$$

3) If $A$ is a linear transformation which leaves the plane $\{x_n=0\}$ invariant the values of $b$ get multiplied by a constant. However the quotients $b(h_1)/b(h_2)$ do not change values i.e $$\frac{b_{\tilde u}(h_1)}{b_{\tilde u}(h_2)}=\frac{b_u(h_1)}{b_u(h_2)}.$$

4) If we multiply $u$ by a constant, i.e.
$$\tilde u(x) = \beta u(x)$$
then $$ b_{\tilde u}(\beta h)= \beta^{-1/2}b_u(h),$$ and $$ \frac{b_{\tilde u}(\beta h_1)}{b_{\tilde u}(\beta h_2)}=\frac{b_u(h_1)}{b_u(h_2)}.$$

\

From \eqref{3star} and property 2 above,
$$c(n)d_n \leq b(h)h^{1/2} \leq C(n)d_n,$$
hence Lemma \ref{l1} will follow if we show that $b(h)$ is bounded below. We achieve this by proving the following lemma.

 \begin{lem}\label{l2} There exist $c_0$, $c(\rho)$ such that if $h \le c(\rho)$ and $b(h) \le c_0$ then \begin{equation}\label{quo}\frac{b(t h)}{b(h)} >2,
 \end{equation} for some $t \in [c_0, 1]$.
 \end{lem}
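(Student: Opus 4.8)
The plan is to argue by contradiction through an iteration/compactness scheme, exploiting the fact that if $b(h)$ were very small, then the section $\tilde S_h$ is very flat in the $x_n$-direction relative to its tangential size, and such flatness cannot persist under the Monge-Amp\`ere equation with bounded right-hand side and quadratic boundary data. First I would normalize: after applying Proposition \ref{TW} and the diagonal rescaling $D_h$, we may assume $\tilde S_h$ has center of mass on the $x_n$-axis, volume comparable to $h^{n/2}$, and $\tilde G_h$ trapped between $|x'|^2$-type paraboloids with $\tilde u \sim |x'|^2$ there. The hypothesis $b(h) \le c_0$ means $\sup_{S_h} x_n \le c_0 h^{1/2}$, so after the linear map the section lives in a slab of height $\le C c_0 h^{1/2}$ while its tangential extent is $\gtrsim h^{1/2}$; equivalently $d_n \le C c_0 h^{1/2}$ while $d_i \gtrsim h^{1/2}$ for $i<n$.

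The key step is to build, inside such a flat section, a lower barrier $w$ that forces $u$ to detach from its tangent plane at $0$ — contradicting \eqref{eq_u1} — UNLESS the section at a smaller level $th$ is substantially taller, i.e. $b(th) > 2 b(h)$. Concretely, I would take a barrier of the form
$$
w = \eps x_n + \frac{c\, h^{1-\alpha}}{?}\,|x'|^2 + M\, h^{1-2\beta}\, x_n^2
$$
tuned so that: (a) on the graph part $\tilde G_h$ it is dominated by $\tilde u \ge \frac{\mu}{2}|x'|^2$, using that $x_n \le g_h(x') \le C|x'|^2$ there; (b) on the top part of $\p \tilde S_h$ (where $x_n$ is of order $d_n = b(h) h^{1/2}$ and $|x'| \lesssim h^{1/2}$) it stays below $h$ — this is exactly where smallness of $b(h)$ is used, since the $x_n^2$ term is then tiny; (c) $\det D^2 w > \Lambda$, which pins down the relation between the coefficients and shows the construction needs the vertical extent of the section to be small. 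If no such barrier works at level $h$, the obstruction must come from the top of the section dropping — and a careful bookkeeping of where the comparison fails yields that the section must be much taller at some intermediate level $th$, $t \in [c_0,1]$, producing \eqref{quo}.

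The main obstacle I expect is part (b): controlling $w$ on the upper cap $\p \tilde S_h \setminus \tilde G_h$, where we have little direct information about $u$ other than $u < h$. The strategy there is to use John's lemma (inclusion \eqref{3star}) to know the cap sits inside $\{x_n \le C d_n\} \cap \{|x'| \le C h^{1/2}\}$, so the barrier's value on the cap is at most $\eps C d_n + c h^{1-\alpha}\cdot C h + M h^{1-2\beta} C^2 d_n^2$; the first term is negligible as $\eps \to 0$, the second is $\lesssim c h^{2-\alpha} \ll h$, and the third is $\lesssim M C^2 h^{1-2\beta} b(h)^2 h = M C^2 b(h)^2 h^{2-2\beta}$, which is $< h$ precisely when $b(h)$ is small enough (choosing $\beta$ so $h^{1-2\beta}$ absorbs correctly). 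Making all these choices consistent with $\det D^2 w > \Lambda$ — which forces the product of the three "curvatures" to exceed $\Lambda$ — is the delicate balancing act, and it is what ultimately forces the dichotomy: either the barrier contradicts \eqref{eq_u1}, or the section at a lower level has grown in the $x_n$-direction by a definite factor. I would also use properties 1)--4) of $b(h)$ to reduce to a clean normalized configuration before running the barrier argument, so that the constants $c_0$ and $t$ depend only on $n,\mu,\lambda,\Lambda$.
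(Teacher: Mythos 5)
Your outline has a genuine gap at step (b), and it is not a gap that can be fixed by more careful bookkeeping: it is the heart of the matter.

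You assume that on the upper cap of $\partial \tilde S_h$ (where $\tilde u = h$) one has $|x'| \lesssim h^{1/2}$. This is not available a priori — it is essentially equivalent to the conclusion of Lemma~\ref{l1}. In the normalized picture we only know $d_i \geq c\,h^{1/2}$ for $i<n$ and $\prod d_i \sim h^{n/2}$. Under your hypothesis $d_n \leq c_0 h^{1/2}$ with $c_0$ small, the product constraint forces $\max_{i<n} d_i \gtrsim h^{1/2}/c_0^{1/(n-1)} \gg h^{1/2}$, so the section may be extremely elongated in some tangential direction and the upper cap can reach out to $|x'| \sim \max_i d_i$. Your barrier's tangential term then blows up on the cap.

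Even if you replace $|x'|^2$ by the John-adapted quantity $\sum_{i<n} c\,h\,(x_i/d_i)^2$ (as the paper does in the proof of Proposition~\ref{TW} for the volume lower bound), the scheme still cannot close. On $\tilde G_h$ one needs the tangential coefficient $c$ small, and on the upper cap one needs both $c$ and the normal coefficient small so that the barrier stays below $h$; but since $\prod d_i \sim h^{n/2}$ now, the determinant is $\det D^2 w \sim c^{n-1}\cdot B$, and $\det D^2 w > \Lambda$ forces one of the coefficients to be of order one — precisely what the cap estimate forbids. This is not a ``delicate tuning'' that eventually works out: carrying out the bookkeeping you sketch shows the three constraints are simultaneously satisfiable only when $b(h) \gtrsim c_0$, i.e. exactly when there is nothing to prove. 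Moreover, the passage from ``no such barrier exists at level $h$'' to the dichotomy $b(th) > 2 b(h)$ is asserted but never demonstrated; there is no mechanism in the argument that produces a definite lower level $t \in [c_0,1]$ where $b$ doubles.

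The paper avoids this entirely. It rescales to $v(x) = h^{-1}\tilde u(h^{1/2} A x)$ so the eccentricity is transferred to the boundary data (the coefficients $a_i$ in \eqref{ai}, some of which are large), normalizes the domain to be comparable to $B_1^+$, and then proves the doubling property \eqref{v} by a compactness argument combined with induction on the number of large $a_i$'s (Proposition~\ref{HDprop}). The base case uses an explicit barrier $r f(\theta) + \tfrac{1}{2M^2}x_n^2$ in polar coordinates whose singular angular profile $f(\theta) = \sigma e^{C_0|\pi/2 - \theta|}$ produces a large determinant without a large quadratic coefficient, and the inductive step (Lemmas~\ref{ball} and \ref{lastlem}) uses a sliding-along-$y$ affine change and the inductive hypothesis to progressively tilt the supporting plane. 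A single direct quadratic barrier of the kind you propose cannot substitute for this machinery.
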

  This lemma states that if the value of $b(h)$ on a certain section is less than a critical value $c_0$, then we can find a lower section at height still comparable to $h$ where the value of $b$ doubled. Clearly Lemma \ref{l2} and property 1 above imply that $b(h)$ remains bounded for all $h$ small enough.

The quotient in \eqref{quo} is the same for $\tilde u$ which is defined in Proposition \ref{TW}. We normalize the domain $\tilde S_h$ and $\tilde u$ by considering the rescaling
$$v(x)= \frac{1}{h} \tilde u(h^{1/2}Ax)$$
where $A$ is a multiple of $D_h$ (see \eqref{3star}), $A=\gamma D_h$ such that $$\det A=1.$$ Then $$ch^{-1/2} \le \gamma \le C h^{-1/2},$$ and the diagonal entries of $A$ satisfy $$a_i \ge c , \quad \quad i=1,2,\cdots, n-1,$$ $$ c b_u(h) \le a_n \le Cb_u(h).$$

The function $v$ satisfies
$$\lambda \leq \det D^2v \leq \Lambda,$$

$$v \geq 0, \quad  v(0)=0,$$
is continuous and it is defined in $\bar \Omega_v$ with

$$\Omega_v:= \{v<1\} = h^{-1/2}A^{-1}\tilde S_h.$$ Then

$$x^* + cB_1 \subset \Omega_v \subset C B_1^+,$$ for some $x^*$, and

$$ct^{n/2} \leq |S_t(v)| \leq Ct^{n/2}, \quad \forall t\leq 1,$$
where $S_t(v)$ denotes the section of $v$. Since $$\tilde u=h \quad \mbox{in} \quad \p \tilde S_h \cap \{x_n \ge C(\rho) h\},$$ then
$$v=1 \quad \text{on $\p \Omega_v \cap \{x_n \geq \sigma\}, \quad \sigma:=C(\rho)h^{1-\alpha}$}.$$
Also, from Proposition \ref{TW} on the part $G$ of the boundary of $\p \Omega_v$ where $\{v<1\}$ we have
 \begin{equation}\label{ai}
 \frac 1 2 \mu \sum_{i=1}^{n-1} a_i^2 x_i^2 \leq v \leq 2 \mu^{-1} \sum_{i=1}^{n-1} a_i^2 x_i^2.
 \end{equation}

In order to prove Lemma \ref{l2} we need to show that if $\sigma$, $a_n$ are sufficiently small depending on $n, \mu, \lambda, \Lambda$ then the function $v$ above satisfies
\begin{equation}\label{v}
b_v (t) \geq 2b_v(1)
\end{equation} for some $1 > t \geq c_0.$

Since $\alpha<1$, the smallness condition on $\sigma$ is satisfied by taking $h<c(\rho)$ sufficiently small. Also $a_n$ being small is equivalent to one of the $a_i$, $1 \le i \le n-1$ being large since their product is 1 and $a_i$ are bounded below.

In the next sections we prove property \eqref{v} above by compactness, by letting $\sigma \to 0$, $a_i \to \infty$ for some $i$. First we consider the 2D case and in the last section the general case.

\section{The 2 dimensional case.}

In order to fix ideas, we consider first the 2 dimensional case.

We study the following class of solutions to the Monge-Ampere equation. Fix $\mu>0$ small, $\lambda, \Lambda.$ We denote by $\mathcal{D}_\sigma$ the set of convex, continuous functions
$$u : \overline \Omega \rightarrow \R$$ such that
\begin{align}
\label{1} & \lambda \leq \det D^2u \leq \Lambda;\\
\label{2} & 0 \in \p\Omega, \quad B_{\mu}(x_0) \subset \Omega \subset B_{1/\mu}^+ \quad \text{for some $x_0$;}\\
\label{3} & \mu h^{n/2} \leq |S_h| \leq \mu^{-1} h^{n/2};
\end{align}
\begin{equation}
u =1 \quad \text{on $\p \Omega \setminus G$}, \quad \quad 0 \le u \le 1 \quad \text{on $G$,} \quad u(0)=0,
\end{equation} with $G$ a closed subset of $\p \Omega$ included in $B_\sigma,$
$$G \subset \ \p \Omega \cap B_\sigma.$$

\begin{prop}\label{2dprop} Assume $n=2$. For any $M>0$ there exists $c_0$ small depending on $M, \mu, \lambda, \Lambda,$ such that if $u \in \mathcal{D}_\sigma$ and $\sigma \leq c_0,$ then
$$b(h) : = (\sup_{S_h} x) h^{-1/2} > M$$ for some $h \geq c_0.$
\end{prop}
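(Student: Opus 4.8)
\emph{Proof plan.} The plan is to argue by contradiction and compactness, as indicated at the end of the previous section: one lets $\sigma\to 0$, passes to a limiting solution, and shows that its configuration is impossible. Assume the Proposition fails for some $M>0$. Then there are $\sigma_k\to 0$, functions $u_k\in\mathcal{D}_{\sigma_k}$ on convex domains $\Omega_k$ with free boundary $G_k\subset\partial\Omega_k\cap B_{\sigma_k}$, and radii $c_k\to 0$, such that
$$ S_h(u_k)\subset\{x_n\le Mh^{1/2}\}\qquad\text{for all }h\in[c_k,1].$$

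\emph{Step 1: the compactness limit.} By \eqref{2} the $\Omega_k$ are uniformly bounded convex sets containing a ball of radius $\mu$, with $0\in\partial\Omega_k$; after passing to a subsequence $\Omega_k\to\Omega_0$ in Hausdorff distance, with $B_\mu(x_0)\subset\Omega_0\subset B_{1/\mu}^+$. The $u_k$ are convex and bounded ($u_k\le 1$) with $\lambda\le\det D^2u_k\le\Lambda$, hence locally uniformly Lipschitz in $\Omega_0$; along a further subsequence $u_k\to u_0$ locally uniformly, and by stability of the Monge-Amp\`ere measure $u_0$ is a convex Aleksandrov solution of $\lambda\le\det D^2u_0\le\Lambda$ in $\Omega_0$ with $u_0(0)=0$. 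Away from the origin the boundary data of the $u_k$ is $\equiv 1$ and, since $\det D^2u_k\le\Lambda$, a barrier argument gives $u_k\to 1$ uniformly near $\partial\Omega_k\setminus B_\delta$ for each fixed $\delta>0$; hence $u_0=1$ on $\partial\Omega_0\setminus\{0\}$. Finally the volume bounds \eqref{3} pass to the limit, $\mu t^{n/2}\le|S_t(u_0)|\le\mu^{-1}t^{n/2}$, and so does the flatness: for every fixed $t\in(0,1]$ (since $c_k<t$ eventually) $S_t(u_0)\subset\{x_n\le Mt^{1/2}\}$, i.e. $b_{u_0}(t)\le M$.

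\emph{Step 2: ruling out the limit.} It remains to show that no such $u_0$ exists. Fix a small $t$. The section $S_t(u_0)$ is convex, contains $0\in\partial\Omega_0$, lies in the slab $\{0\le x_n\le Mt^{1/2}\}$, and has volume $\ge\mu t^{n/2}$; by Fubini some horizontal cross-section of it has $(n-1)$-measure $\ge ct^{(n-1)/2}$, so moving along a segment of $S_t(u_0)$ emanating from $0$ one reaches a point $p_t\in\overline{S_t(u_0)}$ with $|p_t'|\ge ct^{1/2}$ and $u_0(p_t)\le t$, while the boundary point $q_t\in\partial\Omega_0$ on the vertical segment below $p_t$ satisfies $|q_t|\ge ct^{1/2}>0$, hence $u_0(q_t)=1$, and lies within vertical distance $\le Mt^{1/2}$ of $p_t$. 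Thus the convex function $u_0$ drops from $1$ to $\le t$ over a vertical segment of length $O(t^{1/2})$. Comparing $u_0$ from below with an explicit barrier near $q_t$ (a convex function with $\det D^2\ge\Lambda$, lying below the boundary values of $u_0$ on the relevant piece of $\partial\Omega_0$, where $u_0=1$, and on an auxiliary slice of $\partial S_t(u_0)$, where $u_0=t$, and using that $\{x_n=0\}$ supports $\Omega_0$ at $0$ so that $\Omega_0$ contains a cone of fixed opening at $q_t$ toward $x_0$) forces $u_0(p_t)\ge 1-CMt^{1/2}$, which is $>t$ for $t$ small: a contradiction. This proves the Proposition.

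\emph{Main obstacle.} The crux is the construction of the barrier in Step 2. One cannot simply localize on a ball of radius $\sim t^{1/2}$ around $q_t$, since $q_t$ itself sits at distance $\sim t^{1/2}$ from the origin (where $u_0=0$) and such a ball would engulf it; one must instead design the barrier on a region adapted to $S_t(u_0)$ — a thin slab about the segment below $p_t$ that stays off the origin — feeding in the flatness estimate $b_{u_0}\le M$ to control $u_0$ from below on its lateral faces. Barriers of the shape $\eps x_n+\sum c_i(x_i/d_i)^2$, exactly as in the proof of Proposition \ref{TW}, are the natural candidates. An alternative that sidesteps the issue is a second blow-up: the rescalings $v_\tau(x)=\tau^{-1}u_0(\tau^{1/2}x)$ inherit the volume bounds and $b_{v_\tau}\le M$, while their boundary data becomes $\equiv\tau^{-1}\to\infty$; the limit is a convex solution of $\lambda\le\det D^2v_\infty\le\Lambda$ on a cone $\mathcal{C}\subseteq\{x_n\ge 0\}$ equal to $+\infty$ on $\partial\mathcal{C}\setminus\{0\}$ and $0$ at the origin, whose existence is excluded by a Liouville--J\"orgens--Calabi--Pogorelov-type rigidity. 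The $n=2$ hypothesis enters by making $S_t(u_0)$ essentially a planar strip, so that both the volume/Fubini argument and the barrier become one-dimensional in the tangential variable.
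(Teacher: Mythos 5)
Your Step~1 (compactness) is in the same spirit as the paper, which makes this rigorous via Definitions~\ref{bv1}--\ref{bv2}, Lemma~\ref{last} and Proposition~\ref{comp}: the sequence converges to a limiting $u_0$ satisfying \eqref{1}--\eqref{3}, $u_0=\varphi$ on $\p\Omega_0$ in the sense of Definition~\ref{bv2} with $\varphi=1$ on $\p\Omega_0\setminus\{0\}$, $\varphi(0)=0$, and the flatness $b\le M$ becomes the pointwise obstacle $u_0\ge M^{-2}x_2^2$. So far so good.

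The problem is Step~2, which is where all the work is, and which you do not carry out. You honestly flag this under ``Main obstacle,'' but the sketch you give is not the paper's argument and, as stated, it does not close. Your proposed contradiction is of the form $u_0(p_t)\ge 1-CMt^{1/2}>t$, obtained by comparing with a barrier on a thin slab abutting $\p\Omega_0$; but $u_0$ equals $t$ somewhere on the lateral faces of any such slab (since $p_t\in\ov{S_t}$), so any admissible lower barrier there must itself be $\le t$ on part of its boundary, and nothing forces it to climb back to $1-CMt^{1/2}$ at $p_t$. Convexity alone permits $u_0$ to drop from $1$ to $t$ over a distance $\sim t^{1/2}$, so the ``decay rate'' contradiction you are aiming for does not follow without a genuinely new ingredient. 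Your alternative route — a second blow-up plus a J\"orgens--Calabi--Pogorelov rigidity — is also not available off the shelf: the equation here is only $\lambda\le\det D^2\le\Lambda$, for which no such Liouville classification is standard.

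The missing ingredient, and the actual content of the paper's proof, is an explicit global lower barrier and a \emph{different} contradiction. The paper first uses $w=\delta(|x_1|+\tfrac12 x_1^2)+\tfrac{\Lambda}{\delta}x_2^2-Nx_2$ to obtain $u_0\ge\delta|x_1|-Nx_2$, and then constructs, on the angular sector $K=\{\theta_0\le\theta\le\pi-\theta_0\}$, the subsolution $v=rf(\theta)+\tfrac{1}{2M^2}x_2^2$ with $f(\theta)=\sigma e^{C_0|\pi/2-\theta|}$. The key observations are (i) the radial part $rf(\theta)$ is degenerate, so $\det D^2v$ is entirely produced by the interaction of the angular curvature of $f$ with the $x_2^2$ term, giving $\det D^2v\gtrsim \tfrac{f''+f}{f}\cdot\tfrac{\sin^4\theta_0}{M^4}$ precisely on the set $\{v\ge M^{-2}x_2^2\}$ where the obstacle condition $u_0\ge M^{-2}x_2^2$ is not automatically stronger; and (ii) by tuning $\sigma$, $v$ sits below $u_0$ on $\p K$ and below $1$ where relevant. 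The comparison principle then yields $u_0\ge v\ge\eps x_2$, hence $u_0\ge\max\{\eps x_2,\ \delta|x_1|-Nx_2\}$, which forces $|S_h|\le Ch^2$ and contradicts $|S_h|\ge\mu h$ for $h$ small. Note that the contradiction is with the \emph{volume lower bound} \eqref{3}, not with a Lipschitz estimate at the boundary. Without this barrier (or an equivalent construction), your proof has a genuine gap.
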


 Property \eqref{v} easily follows from the proposition above. Indeed, by choosing $$M= 2 \mu^{-1} >2b(1)$$ we prove the existence of a section $h \geq c_0$ such that $$b(h) \geq 2 b(1).$$ Also, the function $v$ of the previous section satisfies $v \in \mathcal{D}_{c_0}$ (after renaming the constant $\mu$) provided that $\sigma$ is sufficiently small and $a_1$ sufficiently large.

\

We prove Proposition \ref{2dprop} by compactness. First we discuss briefly the compactness of bounded solutions to Monge-Ampere equation. For this we need to introduce solutions with possibly discontinuous boundary data.

Let $u:\Omega \to \R$ be a convex function with $\Omega \subset \R^n$ bounded and convex. We denote by
$$\Gamma_u:=\{(x,x_{n+1}) \in \Omega \times \R | \quad x_{n+1} \ge u(x)\}$$ the upper graph of $u$.

\begin{defn}\label{bv1}
We define the values of $u$ on $\p \Omega$ to be equal to $\varphi$ i.e
$$u|_{\p \Omega}=\varphi,$$ if the upper graph of $\varphi:\p \Omega \to \R \cup\{\infty\}$
$$\Phi:=\{(x,x_{n+1}) \in \p \Omega \times \R | \quad x_{n+1} \ge \varphi(x)\}$$
is given by the closure of $\Gamma_u$ restricted to $\p \Omega \times \R$,
$$\Phi:=\overline \Gamma_u \cap (\p \Omega \times \R ).$$
\end{defn}

From the definition we see that $\varphi$ is always lower semicontinuous. The following comparison principle holds:
if $w:\overline{\Omega} \to \R$ is continuous and $$\det D^2w \ge \Lambda \ge \det D^2u, \quad \quad w|_{\p \Omega} \le u|_{\p \Omega},$$ then $$w \le u \quad \mbox{in $\Omega$}.$$
Indeed, from the continuity of $w$ we see that for any $\varepsilon>0$, there exists a small neighborhood of $\p \Omega$ where $w-\varepsilon<u$. This inequality holds in the interior from the standard comparison principle, hence $w \le u$ in $\Omega$.

Since the convex functions are defined on different domains we use the following notion of convergence.

\begin{defn}We say that the convex functions $u_m : \Omega_m \rightarrow \R$ converge to $u : \Omega \rightarrow \R$ if the upper graphs converge $$\overline \Gamma_{u_m} \to \overline \Gamma_u \quad \mbox{in the Hausdorff distance.}$$

Similarly, we say that the lower semicontinuous functions $\varphi_m:\p \Omega_m \to \R$ converge to $\varphi : \p \Omega \rightarrow \R$ if the upper graphs converge $$\Phi_m \to \Phi \quad \mbox{in the Hausdorff distance.}$$
\end{defn}

Clearly  if $u_m$ converges to $u$, then $u_m$ converges uniformly to $u$ in any compact set of $\Omega,$ and $\Omega_m \to \Omega$ in the Hausdorff distance.

\

{\it Remark:} When we restrict the Hausdorff distance to the nonempty closed sets of a compact set we obtain a compact metric space. Thus, if $\Omega_m$, $u_m$ are uniformly bounded then we can always extract a subsequence $m_k$ such that  $u_{m_k} \to u$ and $u_{m_k}|_{\p \Omega_{m_k}} \to \varphi$.

Next lemma gives the relation between the boundary data of the limit $u$ and $\varphi$.

\begin{lem}\label{last} Let $u_m : \Omega_m \rightarrow \R$ be convex functions, uniformly bounded, such that $$\lambda \leq \det D^2 u_m \leq \Lambda$$ and $$ u_m \to u, \quad u_m |_{\p \Omega_m} \to \varphi.$$

Then $$\lambda \le \det D^2 u \le \Lambda,$$ and the boundary data of $u$ is given by $\varphi^*$ the convex envelope of $\varphi$ on $\p \Omega.$
\end{lem}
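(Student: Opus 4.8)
\textbf{Proof proposal for Lemma \ref{last}.}

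The plan is to treat the two assertions separately. The interior estimate $\lambda \le \det D^2 u \le \Lambda$ is the standard closure property of the Monge--Ampère operator under uniform (local) convergence of convex functions: since $u_m \to u$ uniformly on compact subsets of $\Omega$ and $\Omega_m \to \Omega$ in the Hausdorff distance, the Monge--Ampère measures $\det D^2 u_m$ converge weakly-$*$ to $\det D^2 u$ on $\Omega$, and the two-sided bound passes to the limit. I would simply invoke this; it is classical (Aleksandrov, or see \cite{C2}). What really needs proof is the identification of the boundary values of $u$ with $\varphi^*$, the convex envelope of $\varphi$ on $\p\Omega$.

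For the boundary identification I would prove two inclusions of the upper graphs in $\R^{n+1}$, using Definition \ref{bv1}: $u|_{\p\Omega} = \varphi^*$ means $\overline{\Gamma_u} \cap (\p\Omega \times \R) = \Phi^*$, where $\Phi^*$ is the upper graph of $\varphi^*$. First I would show $u|_{\p\Omega} \ge \varphi^*$, i.e. $\overline{\Gamma_u} \cap (\p\Omega\times\R) \subset \Phi^*$. Since each $u_m$ is convex, $\Gamma_{u_m}$ is a convex set, so $\overline{\Gamma_{u_m}}$ is convex, and Hausdorff convergence of convex sets preserves convexity: $\overline{\Gamma_u}$ is convex. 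Now $\varphi_m = u_m|_{\p\Omega_m}$ has upper graph $\Phi_m = \overline{\Gamma_{u_m}} \cap (\p\Omega_m\times\R)$, and $\Phi_m \to \Phi$. A boundary point $(x, t) \in \overline{\Gamma_u} \cap (\p\Omega\times\R)$ is a limit of points $(x_m, t_m) \in \overline{\Gamma_{u_m}}$; I would argue (using that $x \in \p\Omega$ and the sets converge) that these can be taken with $x_m \in \p\Omega_m$, so $(x,t) \in \Phi$, hence $\Phi \subset \overline{\Gamma_u} \cap (\p\Omega\times\R)$. But $\overline{\Gamma_u}$ is convex and contains $\Phi$, so it contains the convex hull of $\Phi$ union $\Gamma_u$; restricting to $\p\Omega\times\R$ and using convexity of $\Omega$ gives $\overline{\Gamma_u}\cap(\p\Omega\times\R) \supset \Phi^*$. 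The reverse containment $\overline{\Gamma_u}\cap(\p\Omega\times\R)\subset\Phi^*$ — equivalently $u|_{\p\Omega}\ge\varphi^*$ — is where I would use a barrier/comparison argument: if at some $x_0\in\p\Omega$ the value of $u|_{\p\Omega}$ dropped below $\varphi^*(x_0)$, one could insert a supporting affine function $\ell \le \varphi^*$ on $\p\Omega$ (hence $\ell\le\varphi_m$ on $\p\Omega_m$ for $m$ large, up to $\eps$) with $\ell(x_0)$ close to $\varphi^*(x_0)$; by the comparison principle for the generalized boundary data (stated in the excerpt, with $w = \ell$ an affine function so $\det D^2\ell = 0 \le \Lambda$, though one must perturb $\ell$ to a strictly convex subsolution or apply the comparison in the form given) one gets $\ell \le u_m$ in $\Omega_m$, hence $\ell \le u$ in $\Omega$, and taking limits at $x_0$ contradicts $u|_{\p\Omega}(x_0) < \ell(x_0)$.

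I expect the main obstacle to be the careful bookkeeping with the three objects $\Gamma_u$, $\overline{\Gamma_u}$, and $\Phi$, and in particular showing that a boundary point of $\overline{\Gamma_u}$ over $x\in\p\Omega$ is actually captured by the limit $\Phi$ of the boundary graphs $\Phi_m$ — this requires using that $\Omega_m\to\Omega$ in Hausdorff distance together with the uniform bounds, so that one cannot "lose" boundary mass in the limit, and it is exactly here that one needs $\varphi^*$ (the convex envelope) rather than $\varphi$, because the limit graph $\overline{\Gamma_u}$ is forced to be convex even though the $\varphi_m$ and their limit $\varphi$ need not be. A secondary technical point is the precise form in which the comparison principle is applied: the version stated in the excerpt needs a continuous strict supersolution $w$, so when using an affine $\ell$ as a lower barrier one should replace it by $\ell - \eps + \delta|x|^2$ or similar and let $\delta,\eps\to 0$. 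Both of these are routine once set up correctly, so the proof is essentially a matter of organizing the Hausdorff-limit arguments cleanly.
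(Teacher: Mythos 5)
Your split into the easy inclusion ($\Phi^* \subset \overline{\Gamma}_u$, proved via convexity of $\overline{\Gamma}_u$ and Hausdorff convergence of the boundary graphs) and the hard one (showing $u|_{\p\Omega} \ge \varphi^*$ via affine supports to $\varphi^*$) is the right decomposition, and your treatment of the easy inclusion and of the interior estimate $\lambda \le \det D^2 u \le \Lambda$ matches the paper's. The gap is in the hard direction. You claim that $\ell \le \varphi_m$ on $\p\Omega_m$ plus the comparison principle gives $\ell \le u_m$ in $\Omega_m$; this is false, and the ``perturbation'' you propose cannot repair it. The comparison principle stated in the paper requires $\det D^2 w \ge \Lambda \ge \det D^2 u_m$, with the barrier having \emph{larger} Monge--Amp\`ere measure than the solution. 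For $w = \ell$ affine, $\det D^2 \ell = 0 < \lambda \le \det D^2 u_m$, so the inequality is in the wrong direction: the solution $u_m$, having strictly positive Monge--Amp\`ere measure, generically dips \emph{below} the affine interpolant of its boundary data (in one dimension: $u'' = 1$, $u(0)=u(1)=0$ gives $u<0$ in $(0,1)$, while $\ell \equiv 0$). Perturbing to $\ell - \eps + \delta|x|^2$ does not help: to satisfy $\det D^2 w = (2\delta)^n \ge \Lambda$ you need $\delta \ge \tfrac{1}{2}\Lambda^{1/n}$, a fixed size, so you cannot send $\delta \to 0$ at the end.

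What replaces this in the paper's proof is the Aleksandrov (ABP) estimate, which is precisely the quantitative bound on how far $u_m$ can dip below an affine function in terms of distance to the boundary. Given a hyperplane $x_{n+1} = l(x)$ lying strictly below the convex hull $K$ of $\Phi$, for $m$ large the set $\{u_m - l \le 0\}$ is compactly contained in $\Omega_m$, and the Aleksandrov estimate (applied on that set, using $\det D^2 u_m \le \Lambda$) yields $u_m - l \ge -C\,d_m^{1/n}$ where $d_m$ is the distance to $\p\Omega_m$. This inequality is stable under the Hausdorff convergence, giving $u - l \ge -C\,d^{1/n}$ in $\Omega$; since the right-hand side vanishes at $\p\Omega$, no point of $\overline{\Gamma}_u$ over $\p\Omega$ can lie below $l$, and taking the supremum over such $l$ gives $\overline{\Gamma}_u \cap (\p\Omega\times\R) \subset \Phi^*$. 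This quantitative control ``$-Cd^{1/n}$'' is exactly what prevents boundary mass from being lost in the limit, and it is the essential ingredient your proposal is missing.
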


\begin{proof}Clearly $\Phi \subset \overline{\Gamma}_u$, hence $\Phi^* \subset \overline{\Gamma}_u$. It remains to show that the convex set $K$ generated by $\Phi$ contains $\overline{\Gamma}_u \cap (\p \Omega \times \R).$

Indeed consider a hyperplane $$x_{n+1} = l(x)$$
which lies strictly below $K.$ Then, for all large $m$ $$\{u_m - l \le  0\} \subset  \Omega_m,$$ and by Alexandrov estimate we have that
$$u_m - l \geq -Cd_m^{1/n}$$ where $d_m(x)$ represents the distance from $x$ to $\p \Omega_m.$ By taking $m \rightarrow \infty$ we see that
$$u - l \geq -C d^{1/n}$$ thus no point on $\p \Omega$ below $l$ belongs to $\overline \Gamma_u.$

\end{proof}

In view of the lemma above we introduce the following notation.
\begin{defn}\label{bv2}
Let $\varphi:\p \Omega \to \R$ be a lower semicontinuous function. When we write that a convex function $u$ satisfies
$$u=\varphi \quad \mbox{on $\p \Omega$}$$ we understand $$u|_{\p \Omega}=\varphi^*$$ where $\varphi^*$ is the convex envelope of $\varphi$ on $\p \Omega$.
\end{defn}

Whenever $\varphi^*$ and $\varphi$ do not coincide we can think of the graph of $u$ as having a vertical part on $\p \Omega$ between $\varphi^*$ and $\varphi$.

It follows easily from the definition above that the boundary values of $u$ when we restrict to the domain $$\Omega_h:=\{u<h\}$$ are given by
$$\varphi_h=\varphi \quad \mbox{on}\quad \p \Omega \cap \{\varphi \le h\} \subset \p \Omega_h$$ and $\varphi_h=h$ on the remaining part of $\p \Omega_h$.

The comparison principle still holds. Precisely, if $w:\overline{\Omega} \to \R$ is continuous and $$\det D^2w \ge \Lambda \ge \det D^2u, \quad \quad w|_{\p \Omega} \le \varphi,$$ then $$w \le u \quad \mbox{in $\Omega$}.$$

The advantage of introducing the notation of Definition \ref{bv2} is that the boundary data is preserved under limits.

\begin{prop}[Compactness]\label{comp}
Assume $$\lambda \le \det D^2u_m \le \Lambda, \quad u_m=\varphi_m \quad \mbox{on $\p \Omega_m$},$$ and $\Omega_m$, $\varphi_m$ uniformly bounded.

Then there exists a subsequence $m_k$ such that $$u_{m_k} \to u, \quad \varphi_{m_k} \to \varphi$$ with
$$\lambda \le \det D^2u \le \Lambda, \quad u=\varphi \quad \mbox{on $\p \Omega$}.$$

\end{prop}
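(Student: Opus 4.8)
The plan is to combine the Hausdorff-compactness of graphs with the two ingredients already established: (a) stability of the Monge-Amp\`ere operator under the given notion of convergence, which was proved in Lemma \ref{last}, and (b) stability of the boundary data in the sense of Definition \ref{bv2}, which is the new point. First I would use the uniform boundedness of $\Omega_m$ and of $\varphi_m$ to place all the closed sets $\overline\Gamma_{u_m}$ and all the graphs $\Phi_m$ inside a fixed compact box in $\R^{n+1}$. Since the Hausdorff distance restricted to nonempty closed subsets of a fixed compact set is itself a compact metric space (the Remark preceding Lemma \ref{last}), I can extract a single subsequence $m_k$ along which simultaneously $\overline\Gamma_{u_{m_k}}\to\overline\Gamma_u$ and $\Phi_{m_k}\to\Phi$ for some closed limit sets; the first of these defines a convex function $u$ on a convex domain $\Omega$ (the vertical projection of $\overline\Gamma_u$), and $\Omega_{m_k}\to\Omega$ in Hausdorff distance. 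This is exactly the setup of Lemma \ref{last}.

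Next I would invoke Lemma \ref{last} itself. By hypothesis $u_m=\varphi_m$ on $\p\Omega_m$ in the sense of Definition \ref{bv2}, which by that definition means that the actual boundary trace $u_m|_{\p\Omega_m}$ equals $\varphi_m^*$, the convex envelope of $\varphi_m$. Thus $u_{m_k}|_{\p\Omega_{m_k}}\to\psi$ where $\psi$ is the limit of $\varphi_{m_k}^*$, and Lemma \ref{last} tells us $\lambda\le\det D^2u\le\Lambda$ and that $u|_{\p\Omega}=\psi^*$, the convex envelope of $\psi$. So $u$ is a solution with boundary data $\psi^*$. What remains is purely a statement about the convex envelopes: I must check that $\psi^*$, the convex envelope of $\lim\varphi_{m_k}^*$, coincides with $\varphi^*$, the convex envelope of $\varphi=\lim\varphi_{m_k}$.

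For this last point the argument is that taking convex envelopes commutes with Hausdorff limits in the following weak sense. On one hand $\varphi_{m_k}^*\le\varphi_{m_k}$ (as functions, equivalently the graph of $\varphi_{m_k}$ sits above that of $\varphi_{m_k}^*$, i.e. $\Phi_{m_k}\subset K_{m_k}$ where $K_{m_k}$ is the convex hull of $\Phi_{m_k}$), so passing to the limit gives $\Phi\subset\lim K_{m_k}=:K$ with $K$ convex, hence the convex hull of $\Phi$ — which is the upper graph of $\varphi^*$ over $\p\Omega$ — is contained in $K$, giving $\varphi^*\ge\psi$ wherever both are defined, so $\varphi^*\ge\psi^*$. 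On the other hand $K_{m_k}$ is the convex hull of $\Phi_{m_k}$, and since $\Phi_{m_k}\to\Phi$, any supporting hyperplane of the convex hull of $\Phi$ from below is, up to $\eps$, a supporting hyperplane from below of $K_{m_k}$ for large $k$; thus $K\subset$ convex hull of $\Phi$, which gives $\psi^* = \varphi^*$. I should be a little careful that the relevant sets are graphs over the varying domains $\p\Omega_m$ and that ``below'' refers to the $x_{n+1}$-direction, but these are the same bookkeeping issues handled in the proof of Lemma \ref{last}, and the Alexandrov-estimate part of that proof already shows the limit graph has no spurious vertical pieces below a separating hyperplane. Putting the three pieces together yields $u=\varphi$ on $\p\Omega$ in the sense of Definition \ref{bv2}, which is the assertion.

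The main obstacle is the commutation of convex envelope with Hausdorff limit in the third paragraph: one direction ($\varphi^*\ge\psi^*$) is soft, but the reverse needs that no new ``low'' points are created in the limit of the convex hulls $K_{m_k}$, which is where the structure of the problem — in particular that the sets live over a converging family of boundaries and that, by Lemma \ref{last}, the limiting solution already has the correct (no vertical excess below a separating plane) behavior — has to be used. Everything else is routine extraction of subsequences and direct appeal to the already-proven Lemma \ref{last}.
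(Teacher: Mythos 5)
Your overall strategy matches the paper's: extract a subsequence so that $u_{m_k}\to u$, $\varphi_{m_k}\to\varphi$, and $\varphi_{m_k}^*\to\psi$ all converge, apply Lemma~\ref{last} (with the actual boundary trace $u_{m_k}|_{\p\Omega_{m_k}}=\varphi_{m_k}^*$) to obtain $u|_{\p\Omega}=\psi^*$, and then reduce everything to the identity $\psi^*=\varphi^*$. That is exactly what the paper does. However, two points in your third paragraph deserve correction.

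First, the inference in your ``one hand'' direction is gappy: from $\Phi\subset K:=\lim K_{m_k}$ you conclude $\operatorname{conv}\Phi\subset K$ and then assert this ``gives $\varphi^*\ge\psi$.'' But $K$ is the limit of the full convex bodies $K_{m_k}=\operatorname{conv}\Phi_{m_k}$, not of the restricted sets $\Phi_{m_k}^*=K_{m_k}\cap(\p\Omega_{m_k}\times\R)$ whose limit defines $\psi$; containment of $\operatorname{conv}\Phi$ in $K$ does not by itself compare $\varphi^*$ with $\psi$. The clean route to $\varphi^*\ge\psi^*$ does not pass through $K$ at all: since $\varphi_{m_k}\ge\varphi_{m_k}^*$ we have $\Phi_{m_k}\subset\Phi_{m_k}^*$, hence $\Phi\subset\Psi$ in the Hausdorff limit, i.e.\ $\varphi\ge\psi$, and taking convex envelopes gives $\varphi^*\ge\psi^*$.

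Second, you identify the reverse inclusion $K\subset\operatorname{conv}\Phi$ (equivalently $\psi\ge\varphi^*$, hence $\psi^*\ge\varphi^*$) as the ``main obstacle'' and suggest it needs the Alexandrov-estimate structure imported from Lemma~\ref{last}. It does not: it is as soft as the other direction. By Carath\'eodory, every point of $\Phi_{m_k}^*\subset K_{m_k}$ is a convex combination of at most $n+2$ points of $\Phi_{m_k}$, and Hausdorff convergence $\Phi_{m_k}\to\Phi$ sends these to points of $\Phi$; thus $\Psi\subset\operatorname{conv}\Phi\cap(\p\Omega\times\R)=\Phi^*$, which is $\psi\ge\varphi^*$. (Your supporting-hyperplane sketch for $K\subset\operatorname{conv}\Phi$ is also self-contained and needs no Monge--Amp\`ere input.) This is precisely the paper's one-line argument: $\varphi\ge\psi\ge\varphi^*$, both inequalities being immediate from the Hausdorff convergence of upper graphs, after which taking convex envelopes pinches $\psi^*=\varphi^*$ and Lemma~\ref{last} finishes the proof.
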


Indeed, we see that we can also choose $m_k$ such that $\varphi^*_{m_k} \to \psi$. Since $\varphi_{m_k} \to \varphi$ we obtain $$\varphi \ge \psi \ge \varphi^*,$$ and the conclusion follows from Lemma \ref{last}.

\

Now we are ready to prove Proposition \ref{2dprop}.

\textit{Proof of Proposition \ref{2dprop}.} If $c_0$ does not exist we can find a sequence of functions $u_m \in \mathcal{D}_{1/m}$ such that $$b_{u_m}(h) \leq M, \quad \forall h \geq \frac 1 m.$$

By Proposition \ref{comp} there is a subsequence which converges to a limiting function $u$ satisfying \eqref{1}-\eqref{2}-\eqref{3} and (see Definition \ref{bv2}) $u=\varphi$ on $\p \Omega$ with
\begin{equation}\label{4proof}
\varphi=1 \quad \mbox{on $\p \Omega \setminus\{0\}$}, \quad \quad \varphi(0)=0,
\end{equation}
and moreover $u$ has an obstacle by below in $\Omega$ 
\begin{equation}\label{6proof} u \geq \frac{1}{M^2} x_2^2.\end{equation}

We consider the barrier $$w:= \delta (|x_1| + \frac 1 2 x_1^2) + \frac{\Lambda}{\delta} x_2^2 - N x_2$$ with $\delta$ small depending on $\mu$, and $N$ large so that
$$\frac \Lambda \delta x_2^2 - Nx_2\le 0 \quad \mbox{in} \quad B_{1/\mu}^+.$$
Then
$$w \leq \varphi \quad \text{on $\p \Omega$},$$
and
$$\det D^2 w > \Lambda.$$

Hence $$w \leq u \quad \text{in $\Omega$}$$ which gives
$$u \geq \delta |x_1| - Nx_2.$$

Next we construct another explicit subsolution $v$ such that whenever $v$ is above the two obstacles
$$\delta |x_1| - Nx_2, \quad \frac{1}{M^2} x_2^2,$$ we have
$$\det D^2v > \Lambda \quad \text{and} \quad v\le 1.$$ Then we can conclude that $$u \geq v,$$ and we show that this contradicts the lower bound on $|S_h|$.

We look for a function of the form $$v:= r f(\theta) + \frac{1}{2M^2} x_2^2,$$ where $r,\theta$ represent the polar coordinates in the $x_1,x_2$ plane.

The domain of definition of $v$ is the angle $$K:= \{\theta_0 \leq \theta \leq \pi - \theta_0\}$$ with $\theta_0$ small so that
$$\frac{1}{2M^2}x_2^2 \le \frac 1 2 (\delta |x_1| - Nx_2) \quad \text{on $\p K \cap B_\mu$}.$$

In the set $$\{v \ge \frac{1}{M^2}x_2^2\}$$ i.e. where
$$\frac{1}{r} \ge \frac{\sin^2 \theta}{2M^2f}$$ we have
\begin{equation}\label{stella}\det D^2v = \frac 1 r (f''+f) \frac{\sin^2\theta}{M^2} \ge  \frac 1 f (f''+f) \frac{\sin^4\theta_0}{2 M^4} .\end{equation}

We let $$f(\theta)= \sigma e^{C_0|\frac \pi 2 - \theta|},$$ where $C_0$ is large depending on $\theta_0, M, \Lambda$ so that (see \eqref{stella})
$$\det D^2 v>\Lambda$$ in the set where $$\{v \ge \frac{1}{M^2}x_2^2\}.$$

On the other hand we can choose $\sigma$ small so that $$v \le \delta |x_1| - Nx_2 \quad \text{on $\p K \cap B_\mu$}$$ and $$v \le 1 \quad \text{on the set $\{v \ge \frac{1}{M^2}x_2^2\}$}.$$

In conclusion $$u \geq v \geq \eps x_2,$$ hence $$u \geq \max \{\eps x_2,  \delta |x_1| - Nx_2\}.$$ This implies $$|S_h| \leq Ch^2$$ for all small $h$ and we contradict that $$|S_h| \geq \mu h, \quad \forall h \in [0,1].$$ \qed

\section{The higher dimensional case}

In higher dimensions it is more difficult to construct an explicit barrier as in Proposition \ref{2dprop} in the case when in \eqref{ai} only one  $a_i$ is large and the others are bounded. We prove our result by induction depending on the number of large eigenvalues $a_i$.

Fix $\mu$ small and $\lambda, \Lambda.$ For each increasing sequence $$\alpha_1\leq \alpha_2\leq \ldots \leq \alpha_{n-1}$$
with $$\alpha_1 \ge \mu,$$
we consider the family of solutions
$$\mathcal{D}_\sigma^\mu(\alpha_1, \alpha_2, \ldots, \alpha_{n-1})$$
of convex, continuous functions $u : \overline \Omega \rightarrow \R$ that satisfy
\begin{equation}\label{HD1} \lambda \leq \det D^2u \leq \Lambda \quad \text{in $\Omega$,} \quad \text{$u\geq 0$ in $\overline \Omega$};
\end{equation}
\begin{equation}\label{HD2}0 \in \p\Omega, \quad B_{\mu}(x_0) \subset \Omega \subset B_{1/\mu}^+ \quad \text{for some $x_0$;}
\end{equation}\begin{equation}\label{HD3} \mu h^{n/2} \leq |S_h| \leq \mu^{-1} h^{n/2};
\end{equation}
\begin{equation}\label{HD4}
u =1 \quad \text{on $\p \Omega \setminus G$}; \end{equation}
and
\begin{equation}\label{HD5} \mu \sum_{1}^{n-1} \alpha_i^2 x_i^2 \leq u \leq  \mu^{-1} \sum_{1}^{n-1} \alpha_i^2 x_i^2 \quad \quad \text{on $G$,}
\end{equation}
where $G$ is a closed subset of $\p \Omega$ which is a graph in the $e_n$ direction and is included in boundary in $\{x_n \leq \sigma\}$.

For convenience we would like to add the limiting solutions when $\alpha_{k+1} \rightarrow \infty$ and $\sigma \rightarrow 0.$ We denote by $$\mathcal{D}_0^\mu(\alpha_1,\ldots, \alpha_k, \infty, \infty, \ldots, \infty)$$ the class of functions $u:\Omega \to \R$ that satisfy properties \eqref{HD1}-\eqref{HD2}-\eqref{HD3} and (see Definition \ref{bv2}) $u=\varphi$ on $\p \Omega$ with
\begin{equation}\label{4'} \varphi=1 \quad \text{on $\p \Omega \setminus G$};\end{equation}
\begin{equation}\label{4''}\mu \sum_{1}^{k} \alpha_i^2 x_i^2 \leq \varphi \leq \min\{1, \, , \mu^{-1} \sum_{1}^{k} \alpha_i^2 x_i^2\} \quad \text{on $G$,} \end{equation}
where $G$ is a closed set $$G \subset \p \Omega \cap \{x_i=0, \quad i>k\},$$ 
and if we restrict to the space generated by the first $k$ coordinates then
$$
\{\, \mu^{-1}   \sum_{1}^k \alpha_i^2 x_i^2 \le 1  \, \} \subset G \subset  \{ \, \mu   \sum_{1}^k \alpha_i^2 x_i^2 \le 1   \, \}.$$

We extend the definition of $\mathcal{D}_\sigma^\mu(\alpha_1, \alpha_2, \ldots, \alpha_{n-1})$ to include also the pairs with $$\mu \le \alpha_1 \leq \ldots \le \alpha_k < \infty, \quad \alpha_{k+1}= \cdots=\alpha_{n-1}=\infty$$
for which $\sigma =0$ i.e. $\mathcal{D}_0^\mu(\alpha_1, \alpha_2, \ldots, \alpha_{k}, \infty, \ldots, \infty).$

Proposition \ref{comp} implies that if $$u_m \in D_{\sigma_m}^\mu(a^m_1,\ldots, a^m_{n-1})$$ is a sequence with $$\sigma_m \to 0 \quad \mbox{ and } \quad a^m_{k+1} \to \infty$$ for some fixed $0 \le k \le n-2$, then we can extract a convergent subsequence to a function $u$ with
$$u \in D_0^\mu(a_1,..,a_l,\infty,..,\infty) \quad ,$$
for some $l \le k$ and $a_1 \le \ldots \le a_l.$

\begin{prop}\label{HDprop} For any $M>0$ and $1 \leq k \leq n-1$ there exists $C_k$ depending on $M, \mu, \lambda, \Lambda,n, k$ such that if $u \in \mathcal{D}_\sigma^\mu(\alpha_1, \alpha_2, \ldots, \alpha_{n-1})$ with $$\alpha_k \geq C_k, \quad \sigma \leq C_k^{-1}$$ then $$b(h)= (\sup_{S_h} x_n) h^{-1/2} \geq M$$ for some $h$ with $ C_k^{-1} \leq h \leq 1 .$\end{prop}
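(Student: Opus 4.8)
The plan is to prove Proposition \ref{HDprop} by induction on $k$, the number of large eigenvalues $\alpha_i$, using the compactness built up in Section 4 together with the barrier arguments of the two-dimensional case. For the base case $k=1$, one argues essentially as in Proposition \ref{2dprop}: if no such $C_1$ existed, one would get a sequence $u_m \in \mathcal{D}^\mu_{\sigma_m}(\alpha^m_1,\dots,\alpha^m_{n-1})$ with $\sigma_m\to 0$, $\alpha^m_1\to\infty$, and $b_{u_m}(h)\le M$ for all $h\ge C_m^{-1}$. By the compactness discussion following the definition of $\mathcal{D}^\mu_0$, a subsequence converges to a limit $u \in \mathcal{D}^\mu_0(\infty,\dots,\infty)$ — that is, $\varphi=1$ off $G$ and $G=\{0\}$ since all the $\alpha_i$ blew up — and the lower bound $b_{u_m}(h)\le M$ passes to the limit as an obstacle $u\ge M^{-2}x_n^2$ from below, exactly as in \eqref{6proof}. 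The barrier $w$ built from $\delta(|x'|+\frac12|x'|^2)+\Lambda\delta^{-1}x_n^2 - Nx_n$ forces $u\ge \delta|x'|-Nx_n$, and then one constructs the conical subsolution $v = r f(\theta)+\frac{1}{2M^2}x_n^2$ on a narrow cone around the $x_n$-axis (now in $n$ dimensions, with $r,\theta$ polar coordinates in a plane through $e_n$, or more precisely $r = $ distance to the $x_n$-axis) with $f$ exponentially growing, to conclude $u\ge \varepsilon x_n$ and hence $|S_h|\le Ch^{(n+1)/2} = o(h^{n/2})$, contradicting \eqref{HD3}. The $n$-dimensional barrier construction is slightly more delicate than in 2D but is of the same flavor since only one direction is "bad."

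For the inductive step, assume the statement holds for $k-1$ and suppose it fails for $k$: take $u_m\in\mathcal{D}^\mu_{\sigma_m}(\alpha^m_1,\dots,\alpha^m_{n-1})$ with $\alpha^m_k\to\infty$, $\sigma_m\to 0$, and $b_{u_m}(h)\le M$ for all $h\ge C_m^{-1}$. Passing to a subsequence via Proposition \ref{comp}, we get a limit $u\in\mathcal{D}^\mu_0(a_1,\dots,a_l,\infty,\dots,\infty)$ for some $l\le k-1$ (at least the last $n-k$ directions, plus whichever of the first $k$ also blew up, go to infinity). Again $u\ge M^{-2}x_n^2$ in $\Omega$. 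Now the key point: the limiting boundary data $\varphi$ vanishes on the slice $\{x_i=0 \text{ for } i>l\}\cap G$, and on that slice the section geometry of $u$ at $0$ is governed by a lower-dimensional problem — restricting $u$ to an appropriate lower-dimensional section or using the structure of $G$, one should be able to invoke the inductive hypothesis (with $l\le k-1$ large eigenvalues in fewer effective variables, or with the degenerate directions removed) to deduce $b_u(h)\ge M$ for some $h\ge c>0$, contradicting $b_u(h)\le M$ inherited from the $u_m$. The delicate part is making precise how a limit in $\mathcal{D}^\mu_0(a_1,\dots,a_l,\infty,\dots,\infty)$ reduces to a genuine lower-dimensional member of the same family of classes: one must check that the renormalized restriction still satisfies the volume bounds \eqref{HD3}, the inclusion \eqref{HD2}, and the boundary structure \eqref{4'}-\eqref{4''} with parameters to which the inductive hypothesis applies.

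The main obstacle I expect is precisely this dimensional reduction in the inductive step — controlling what happens along the directions $x_i$ with $i>l$ where $\varphi\equiv 0$ on $G$, i.e., where the graph of $u$ has a genuine "vertical wall" in the sense of Definition \ref{bv2}. One has to show that the presence of these infinitely-flat directions does not destroy the volume normalization and that the problem genuinely localizes onto the subspace spanned by the first $l$ coordinates and $e_n$. A clean way to handle this may be to first prove, using barriers supported on the degenerate directions (similar to the $w$ above but acting in the $x_i$, $i>l$, variables), that $S_h(u)$ is pinched into a neighborhood of $\{x_i=0,\ i>l\}$ of the right scale, so that the effective domain is $(l+1)$-dimensional up to controlled errors; then the inductive hypothesis applies to this effective problem. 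The barrier constructions themselves (both the linear-type $w$ and the conical $v$) are routine adaptations of Section 4 and should not present essential difficulty, though verifying $\det D^2 v > \Lambda$ in $n$ dimensions requires care with the eigenvalues of $D^2(rf(\theta))$ in the radial plane versus the remaining flat directions.
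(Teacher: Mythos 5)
Your base case matches the paper's Lemma \ref{base} in all essentials: the exponential-in-angle subsolution $v=rf(\theta)+\tfrac{1}{2M^2}x_n^2$ (with $r=|x|$ and $\theta$ the angle to $\{x_n=0\}$) leads to $u\ge\max\{\delta|x'|-Nx_n,\,\eps x_n\}$ and a volume bound contradicting \eqref{HD3}. (Your $|S_h|\le Ch^{(n+1)/2}$ should be $|S_h|\le Ch^{n}$ since the lower bound on $u$ is linear in $|x'|$, not quadratic, but either bound suffices for the contradiction.)

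The inductive step, however, has a genuine gap. You correctly reduce by compactness to a limiting $u\in\mathcal D_0^{\tilde\mu}(a_1,\dots,a_l,\infty,\dots,\infty)$ with $b_u(h)\le M$ for all $h$, but you then propose to ``restrict $u$ to an appropriate lower-dimensional section'' and invoke the induction hypothesis in ``fewer effective variables.'' This is not how the argument can go, and it is the key difficulty, not a routine detail. The determinant equation does not descend to slices: a restriction of $u$ to $\{x_i=0,\ i>l\}$ is not a solution of a lower-dimensional Monge--Amp\`ere equation with the needed bounds, and the volume normalization \eqref{HD3} is intrinsically $n$-dimensional. Moreover, in the critical case where all of $a_1,\dots,a_k$ remain bounded (which is exactly the case the paper isolates by assuming $\alpha_k<C_k$), the induction hypothesis does \emph{not} apply directly to the limit $u$, because its $k$-th eigenvalue is small, not large.

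The paper's actual mechanism is quite different and consists of two lemmas you do not anticipate. First, Lemma \ref{ball}: one applies the induction hypothesis not to a lower-dimensional object but to $n$-dimensional rescalings $w(x)=\tfrac1h\tilde u(h^{1/2}Ax)$ of $u$ on its own sections $\tilde S_h$, after normalizing by a John-type matrix $A$. If $b_u(h)\sim\theta_n$ were small, then since $\theta_n\prod\beta_i\prod\gamma_j=\det A=1$ with $\gamma_j$ bounded, some $\beta_i$ would be large; so $w\in\mathcal D_0^{\bar\mu}(\beta_1,\dots,\beta_k,\infty,\dots,\infty)$ satisfies the induction hypothesis (for $k$), giving $b_w(\tilde h)\ge\bar M$ and hence $b_u(h\tilde h)\ge 2b_u(h)$. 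Iterating contradicts $b_u\le M$; similarly one bounds $\gamma_j$ from below. The upshot is that $\tilde S_h$ is equivalent to a ball of radius $h^{1/2}$. Second, Lemma \ref{lastlem}: a self-improvement barrier argument showing that $u\ge p(|z|-qx_n)$ implies $u\ge p'(|z|-(q-\eta)x_n)$ for a fixed gain $\eta>0$ (using the ball-equivalence of Lemma \ref{ball} to control the rescaled domain), which after finitely many iterations yields $u\ge\eps(|z|+x_n)$, contradicting ball-equivalence of $\tilde S_h$. Your proposal has neither of these ingredients; the ``dimensional reduction'' you flag as the delicate part is in fact replaced by a scale-iteration on $n$-dimensional sections plus a cone-improvement lemma, and without them the inductive step does not close.
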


As we remarked in the previous section, property \eqref{v} and therefore Lemma \ref{l2} follow from Proposition \ref{HDprop} by taking $k=n-1$ and $M=2\mu^{-1}$.

\

We prove the proposition by induction on $k$.

\begin{lem}\label{base} Proposition \ref{HDprop} holds for $k=1$.\end{lem}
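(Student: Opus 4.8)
The plan is to argue by compactness, exactly as in the two-dimensional Proposition \ref{2dprop}, reducing the general $n$-dimensional statement with a single large eigenvalue back to a situation governed by an explicit rotationally-symmetric barrier in the $(x_1,x_n)$-plane. Suppose the lemma fails. Then for every $m$ there is a function $u_m \in \mathcal{D}^\mu_{\sigma_m}(\alpha_1^m,\dots,\alpha_{n-1}^m)$ with $\alpha_1^m \to \infty$, $\sigma_m\to 0$, and $b_{u_m}(h)\le M$ for all $h\in[1/m,1]$. Since $\alpha_1^m\le\alpha_2^m\le\cdots$, when $\alpha_1^m\to\infty$ all the $\alpha_i^m\to\infty$, so by the compactness remark following Proposition \ref{comp} (applied with $k=0$) a subsequence converges to some $u\in\mathcal{D}^\mu_0(\infty,\dots,\infty)$; that is, $u$ is a convex solution of $\lambda\le\det D^2u\le\Lambda$ on a domain $\Omega$ with $B_\mu(x_0)\subset\Omega\subset B_{1/\mu}^+$, with $|S_h|$ comparable to $h^{n/2}$, and with boundary data $\varphi=1$ on $\partial\Omega\setminus\{0\}$, $\varphi(0)=0$ (here $G$ degenerates to the single point $0$). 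The hypothesis $b_{u_m}(h)\le M$ passes to the limit and gives the obstacle $u\ge M^{-2}x_n^2$ on $\Omega$, as in \eqref{6proof}.

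Next I would build the barriers. First, as in the 2D proof, a barrier of the form $w=\delta(|x_1|+\tfrac12 x_1^2)+\tfrac{\Lambda}{\delta}x_n^2-Nx_n$ (with $\delta$ small depending on $\mu$, $N$ large) satisfies $w\le\varphi$ on $\partial\Omega$ and $\det D^2 w>\Lambda$, hence $u\ge w$ and in particular $u\ge\delta|x_1|-Nx_n$. The key point of the case $k=1$ is that the obstacle coming from $b\le M$ involves only $x_n$ and the barrier-forcing term involves only $x_1$, so the genuinely two-dimensional construction of Proposition \ref{2dprop} can be carried out verbatim in the $(x_1,x_n)$-variables: set $v:=r f(\theta)+\tfrac{1}{2M^2}x_n^2$ where $(r,\theta)$ are polar coordinates in the $(x_1,x_n)$-plane, defined on the cone $K=\{\theta_0\le\theta\le\pi-\theta_0\}$, with $f(\theta)=\sigma e^{C_0|\pi/2-\theta|}$. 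On the region where $v\ge M^{-2}x_n^2$ one computes, treating the remaining coordinates $x_2,\dots,x_{n-1}$ as carrying a fixed positive eigenvalue contribution (which only helps), that $\det D^2 v>\Lambda$ once $C_0$ is large; and $\sigma$ can be chosen small so that $v\le\delta|x_1|-Nx_n$ on $\partial K\cap B_\mu$ and $v\le1$ on $\{v\ge M^{-2}x_n^2\}$. Then $u\ge v\ge\eps x_n$, so $u\ge\max\{\eps x_n,\ \delta|x_1|-Nx_n\}$, which forces $|S_h|\le Ch^{(n+1)/2}$ — too small — contradicting $|S_h|\ge\mu h^{n/2}$ for small $h$.

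The main obstacle is making the barrier computation legitimately $n$-dimensional rather than merely formal: in $n\ge 3$ the function $v=rf(\theta)+\tfrac{1}{2M^2}x_n^2$ is constant in $x_2,\dots,x_{n-1}$, so $\det D^2 v\equiv 0$ and cannot by itself be a subsolution. The fix is to add a small strictly convex term in the remaining variables, e.g. replace $v$ by $v+\eta\sum_{2}^{n-1}x_i^2$, or equivalently to incorporate the lower bound $\mu\sum\alpha_i^2x_i^2$ on $G$ (all $\alpha_i$ are large in the limit) so that the transverse directions contribute a definite amount of convexity; one must then check that the product of the two small "planar" eigenvalues of the Hessian of $rf(\theta)$ together with the $n-2$ transverse eigenvalues still exceeds $\Lambda$ on the relevant region, and that the added term keeps $v$ below $1$ there. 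Organizing this bookkeeping — which eigenvalues are large, which are merely bounded below, and choosing $\theta_0$, $C_0$, $\sigma$, $\eta$ in the right order depending only on $n,\mu,\lambda,\Lambda,M$ — is the technical heart of the argument; once it is in place the contradiction with the volume bound \eqref{HD3} is immediate and identical to the 2D case.
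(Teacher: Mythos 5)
Your compactness reduction is right: if the lemma fails, the monotonicity $\alpha_1^m\le\cdots\le\alpha_{n-1}^m$ forces all $\alpha_i^m\to\infty$ together, Proposition \ref{comp} gives a limit $u\in\mathcal{D}_0^\mu(\infty,\ldots,\infty)$ with $G$ degenerating to $\{0\}$, and the constraint $b_{u_m}(h)\le M$ passes to the limit as the obstacle $u\ge M^{-2}x_n^2$. Where you diverge from the paper is in the coordinates for the barrier, and your fix, while not obviously wrong, leaves gaps. The paper does not work in the $(x_1,x_n)$-plane at all: it sets $r=|x|$ (the full radial variable in $\R^n$) and lets $\theta\in[0,\pi/2]$ be the angle between the ray through $x$ and the hyperplane $\{x_n=0\}$, so $v=rf(\theta)+\frac{1}{2M^2}x_n^2$ is rotationally symmetric in $x'$. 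This makes $\det D^2 v$ genuinely positive in $n$ dimensions, with the $n-2$ angular directions contributing the factor $\bigl(\frac{f\cos\theta-f'\sin\theta}{r\cos\theta}\bigr)^{n-2}$, and one takes $f$ decreasing in $\theta$ to keep that factor bounded below; no extra quadratic term is needed, and the bookkeeping you flag as the "technical heart" simply never arises.

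Two concrete issues in your version. First, the degeneracy you note for $v$ also afflicts your preliminary barrier $w=\delta(|x_1|+\frac12x_1^2)+\frac\Lambda\delta x_n^2-Nx_n$: it too is independent of $x_2,\ldots,x_{n-1}$, hence $\det D^2w\equiv 0$ for $n\ge 3$, so it cannot be a subsolution and does not yield $u\ge\delta|x_1|-Nx_n$. The paper's preliminary bound is $u\ge\delta|x'|-Nx_n$ (full tangential norm), obtained from a barrier rotationally symmetric in $x'$; this is essential. Second, even granting your bound $u\ge\max\{\eps x_n,\ \delta|x_1|-Nx_n\}$ plus an $\eta\sum_{i\ge 2}x_i^2$ contribution from the modified $v$, the resulting volume estimate is $|S_h|\le C h\cdot h\cdot h^{(n-2)/2}=Ch^{(n+2)/2}$, not $Ch^{(n+1)/2}$; this still contradicts $|S_h|\ge\mu h^{n/2}$, but the paper's cleaner bound $u\ge\max\{\delta|x'|-Nx_n,\ \eps x_n\}$ gives $|S_h|\le Ch^n$ directly. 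In short, replacing the $(x_1,x_n)$-polar coordinates by $(|x|,\theta)$-coordinates in all your barriers both repairs the degeneracy and shortens the argument to essentially the 2D computation; as written, your proposal is incomplete rather than wrong, but the coordinate choice is the missing idea.
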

\begin{proof} By compactness we need to show that there does not exist $u \in \mathcal{D}_0^\mu(\infty, \ldots, \infty)$ with $b(h) \leq M$ for all $h$.

The proof is almost identical to the 2 dimensional case. One can see as before that $$u \geq \max \{\delta|x'| - Nx_n, \frac{1}{M^2} x_n^2\}$$
and then construct a barrier of the form
$$v= r f(\theta) + \frac{1}{2M^2} x_n^2, \quad \theta_0 \leq \theta \leq \frac \pi 2$$
where $r=|x|$ and $\theta$ represents the angle in $[0, \pi/2]$ between the ray passing through $x$ and the $\{x_n=0\}$ plane.

Now,
$$\det D^2 v= \frac{f''+f}{r} \left(\frac{f\cos \theta - f'\sin \theta}{r\cos \theta}\right)^{n-2} \frac{\sin^2 \theta}{M^2}.$$

We have $$\frac f r > \frac{\sin^2\theta}{2 M^2} \quad \text{on the set $\{v > \frac{1}{M^2} x_n^2\}$}$$ and we choose a function of the form $$f(\theta):= \nu e^{C_0 (\frac \pi 2 -\theta )}$$
which is decreasing in $\theta.$

Then $$\det D^2 v > \frac{f''+f}{f} \left(\frac{\sin^2 \theta_0}{2M^2}\right)^{n-1} > \Lambda$$
if $C_0$ is chosen large.

We obtain as before that $$u \geq \max \{\delta |x'| - Nx_n, \eps x_n\}$$
which gives $$|S_h| \leq Ch^n$$ and we reach a contradiction.

\end{proof}

Now we prove Proposition \ref{HDprop} by induction on $k$.

\

\textit{Proof of Proposition \ref{HDprop}.} In this proof we denote by $c$, $C$ positive constants that depend on $M, \mu, \lambda, \Lambda, n$ and $k$.

We assume that the statement holds for $k$ and we prove it for $k+1.$

 It suffices to show the existence of $C_{k+1}$ only in the case when $\alpha_k < C_k,$ otherwise we use the induction hypothesis.

If no $C_{k+1}$ exists then we can find a limiting solution $$u \in \mathcal{D}_0^{\tilde \mu}(1,1,\ldots, 1, \infty, \ldots, \infty)$$ with \begin{equation}\label{HDstar}b(h) < M h^{1/2}, \quad \forall h>0\end{equation} where $\tilde \mu $ depends on $\mu$ and $C_k.$

We show that such a function $u$ does not exist.

Denote $$x= (y,z,x_n), \quad y=(x_1, \ldots, x_k) \in \R^k, \quad z=(x_{k+1}, \ldots, x_{n-1}) \in \R^{n-1-k}.$$

On the $\p \Omega$ plane we have
$$\varphi \geq w:=\delta |x'|^2 + \delta |z|+ \frac{\Lambda}{\delta^{n-1}} x_n^2 - N x_n$$ for some small $\delta$ depending on $\tilde \mu$, 
and $N$ large so that $$ \frac{\Lambda}{\delta^{n-1}} x_n^2 - N x_n \le 0 \quad \mbox{on} \quad B_{1/\tilde \mu}^+.$$ 
Since $$\det D^2 w > \Lambda,$$ we obtain $u \ge w$ on $\Omega$ hence
\begin{equation}\label{HD2star}u(x) \geq \delta |z| - N x_n.\end{equation}
We look at the section $S_h$ of $u$. From \eqref{HDstar}-\eqref{HD2star} we see that
\begin{equation}\label{HD3star}S_h \subset \{x_n > \frac{1}{N}(\delta |z| - h)\} \cap \{x_n \leq Mh^{1/2}\}.\end{equation}

We notice that an affine transformation $x \rightarrow Tx,$
$$Tx := x+ \nu_1 z_1+ \nu_2 z_2+ \ldots + \nu_{n-k-1} z_{n-k-1} + \nu_{n-k}x_n$$ with $$\nu_1, \nu_2, \ldots, \nu_{n-k} \in span\{e_1, \ldots, e_k\}$$
i.e  a {\it sliding along the $y$ direction}, leaves the $z, x_n$ coordinate invariant together with the subspace $(y, 0, 0).$

The section $\tilde S_h:=TS_h$ of the rescaling
$$\tilde u(Tx) = u(x)$$ satisfies \eqref{HD3star} and $\tilde u=\tilde \varphi$ on $\p \tilde S_h$ with
$$ \tilde \varphi= \varphi \quad \mbox{on $\tilde G:=\{\varphi \le h\} \subset G$},$$
$$ \tilde \varphi=h \quad \mbox{on $\p \tilde S_h \setminus \tilde G$}.$$

 From John's lemma we know that $S_h$ is equivalent to an ellipsoid $E_h$. We choose $T$ an appropriate sliding along the $y$ direction, so that $TE_h$ becomes symmetric with respect to the $y$ and $(z,x_n)$ subspaces, thus
 $$\tilde x_h^* + c(n) |\tilde S_h|^{1/n} AB_1 \subset \tilde S_h \subset C(n) |\tilde S_h|^{1/n} A B_1, \quad \det A=1$$
 and the matrix $A$ leaves the $y$ and the $(z,x_n)$ subspaces invariant.

 By choosing an appropriate system of coordinates in the $y$ and $z$ variables we may assume
 $$A(y,z,x_n) = (A_1 y, A_2(z,x_n))$$ with
\[
 A_{1} =
 \begin{pmatrix}
  \beta_{1} & 0 & \cdots & 0 \\
  0 & \beta_{2} & \cdots & 0 \\
  \vdots  & \vdots  & \ddots & \vdots  \\
  0 & 0 & \cdots & \beta_{k}
 \end{pmatrix}
\]
with $0<\beta_1 \le \cdots \le \beta_k$, and

\[
 A_{2} =
 \begin{pmatrix}
  \gamma_{k+1} & 0 & \cdots & 0 &\theta_{k+1} \\
  0 & \gamma_{k+2} & \cdots & 0 & \theta_{k+2} \\
  \vdots  & \vdots  & \ddots & \vdots & \vdots  \\
  0 & 0 & \cdots & \gamma_{n-1} & \theta_{n-1}\\
  0 & 0 & \cdots & 0& \theta_{n}
 \end{pmatrix}
\]
with $\gamma_j$, $\theta_n >0$.

 Next we use the induction hypothesis and show that $\tilde S_h$ is equivalent to a ball.

\begin{lem} \label{ball}
There exists $C_0$ such that $$ \tilde S_h \subset C_0 h^{n/2} B_1^+.$$
 \end{lem}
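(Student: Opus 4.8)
The plan is to apply the induction hypothesis (Proposition \ref{HDprop} for $k$) to a suitably rescaled version of $\tilde u$, and use the hypothesis \eqref{HDstar} that $b(h)$ stays bounded to force the $z$- and $x_n$-directions of $\tilde S_h$ to be comparable to the $y$-directions, which are already comparable to $h^{1/2}$ because $\alpha_1 = \cdots = \alpha_k = 1$. First I would record what is already known about the $\beta_i$: since $\tilde u$ has quadratic boundary data $\mu |y|^2 \le \tilde\varphi \le \mu^{-1}|y|^2$ on $\tilde G$ restricted to the $y$-subspace, the section $\tilde S_h$ contains a $y$-ball of radius $\sim h^{1/2}$ and, by the volume bound \eqref{HD3} together with $\det A = 1$, one gets $c h^{1/2} \le \beta_i \le C h^{1/2}$ for $i=1,\ldots,k$. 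Similarly $\prod_{j} \gamma_j \cdot \theta_n$ is comparable to $1$ after factoring out $h^{n/2}$, so the content of the lemma is exactly that the largest of $\gamma_{k+1},\ldots,\gamma_{n-1},\theta_n$ is $\lesssim h^{1/2}$.

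Suppose not. Then along a sequence $h_m \to 0$ (or with $h$ fixed but the relevant directions degenerating), the rescaled solution $v_m(x) := h_m^{-1}\,\tilde u(h_m^{1/2} A x)$ lives in a domain $\Omega_{v_m} = h_m^{-1/2}A^{-1}\tilde S_{h_m}$ which is equivalent to $B_1^+$ and which has quadratic boundary data on the $y$-subspace with coefficients all equal to $1$, but in which some direction among $z, x_n$ blows up relative to the others; equivalently, passing to the normalized picture, $v_m$ lies in a class $\mathcal{D}^{\tilde\mu}_{\sigma_m}(\tilde\alpha_1^m,\ldots)$ in which the $y$-coefficients are bounded but at least one of the remaining coefficients tends to $\infty$ and $\sigma_m \to 0$. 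By the compactness statement following the definition of $\mathcal{D}_0^\mu$, a subsequence converges to a limit in $\mathcal{D}_0^{\tilde\mu}(\tilde\alpha_1,\ldots,\tilde\alpha_l,\infty,\ldots,\infty)$ with $l \le k$, and by the induction hypothesis applied at level $k$ (noting $\tilde\alpha_k$ would have to be large, or else we are already done), such a limit must satisfy $b(t) \ge 2\mu^{-1} \ge 2b(1)$ for some $t \ge c$. But \eqref{HDstar} forces $b$ to stay bounded by $M$ along the whole family, and $b$ is preserved under the slidings $T$ and scales correctly under $A = \gamma D_h$ by properties 2–4 of $b(h)$; this contradicts the uniform bound. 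Hence the degeneration cannot occur and $\gamma_j, \theta_n \le C_0 h^{1/2}$, which combined with the lower volume bound gives $\tilde S_h \subset C_0 h^{n/2} B_1^+$.

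The main obstacle I expect is bookkeeping the rescalings: one must check that applying the sliding $T$ and the volume-normalizing map $A$ keeps the transformed solution inside one of the classes $\mathcal{D}^\mu_\sigma(\cdots)$ with the $y$-block coefficients still bounded below and above and only the $z,x_n$-block possibly degenerating, so that the induction hypothesis at level $k$ (rather than $k+1$) genuinely applies. In particular one needs that the ``$\sigma$'' parameter of the rescaled problem — the height of the graph part $\tilde G$ — goes to $0$, which follows from \eqref{HD3star} since $\tilde S_h \cap \{x_n \text{ large}\}$ is cut off at height $M h^{1/2}$ while the graph part sits below $\sigma$; after rescaling by $h^{-1/2}$ this height is $\lesssim h^{-1/2}\sigma \to 0$ once $\sigma = \sigma(h)$ is taken small, exactly as in the passage from $\tilde S_h$ to $v$ in Section 3. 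The rest is the soft compactness argument of Proposition \ref{comp} together with property 1 of $b(h)$ to conclude that $b$ bounded on a fixed interval survives in the limit.
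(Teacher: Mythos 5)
Your high-level plan --- rescale by $A$, apply the induction hypothesis at level $k$, and exploit the bound $b<M$ --- is indeed the route the paper takes, but your description of the geometry is reversed in several places, and those reversals break the argument. You assert that $ch^{1/2}\le\beta_i\le Ch^{1/2}$ is ``already known'' and that the content of the lemma is an upper bound on $\gamma_{k+1},\ldots,\gamma_{n-1},\theta_n$. Both statements are inverted: in the paper's $\det A=1$ normalization the upper bounds $\gamma_j,\theta_n\le C$ are the easy part and come directly from \eqref{HD3star}, while the substantive content of Lemma \ref{ball} is the \emph{lower} bounds $\gamma_j,\theta_n\ge c$, since only with those in hand does $\theta_n\prod\beta_i\prod\gamma_j=1$ yield the needed upper bound on the $\beta_i$ (the lower bound $\beta_i\ge c$ is free; the upper bound is not). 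Likewise you write that in the degenerating family ``the $y$-coefficients are bounded but at least one of the remaining coefficients tends to $\infty$.'' This is backwards: the $z$-coefficients of $u\in\mathcal{D}_0^{\tilde\mu}(1,\ldots,1,\infty,\ldots,\infty)$ are already $\infty$, and what actually blows up is $\beta_k$, which is forced large precisely when $\theta_n$ or some $\gamma_j$ becomes small.

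Once the degeneration is correctly identified, the rescaled function $w(x)=\frac1h\tilde u(h^{1/2}Ax)$ belongs to $\mathcal{D}_0^{\bar\mu}(\beta_1,\ldots,\beta_k,\infty,\ldots,\infty)$ with $\beta_k$ large, so the induction hypothesis at level $k$ applies directly to $w$; the extra compactness/limiting step in your proposal is superfluous. The contradiction with $b_u<M$ is also more delicate than your sketch allows: one must translate $b_w(\tilde h)\ge\bar M$ into a statement about $b_u$ via $b_u(h\tilde h)/b_u(h)=b_w(\tilde h)/b_w(1)$, and the paper runs this argument twice with two different thresholds --- once with $\bar M=2\bar\mu^{-1}$ to bound $\theta_n$ from below, and again with the larger $\bar M_1=2M/(\bar\mu c_\star)$ so that $b_u(h\tilde h)\ge 2M$ contradicts the hypothesis outright. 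With the $y$- and $(z,x_n)$-blocks confused and the inequality aimed in the wrong direction, your proposal as written does not close.
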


\begin{proof} Using that $$|\tilde S_h| \sim h^{n/2}$$ we obtain
 $$\tilde x_h^* + c h^{1/2} AB_1 \subset \tilde S_h \subset C h^{1/2} AB_1.$$
We need to show that $$\|A\| \le C.$$

Since $\tilde S_h$ satisfies \eqref{HD3star} we see that
$$\tilde S_h \subset \{ |(z,x_n)| \le C h^{1/2}\},$$
which together with the inclusion above gives $\|A_2\| \le C$ hence
$$\gamma_j, \theta_n \leq C, \quad |\theta_j| \leq C.$$
Also $\tilde S_h$ contains the set $$\{(y,0,0) | \quad |y| \leq \tilde \mu^{1/2}h^{1/2}\} \subset \tilde G,$$
which implies $$\beta_i \geq c >0, \quad  i=1,\cdots,k.$$

We define the rescaling
$$w(x) = \frac 1 h \tilde u (h^{1/2} Ax)$$ which is defined in a domain $\Omega_w:=h^{-1/2}A^{-1}\tilde S_h$ such that
$$B_c(x_0) \subset \Omega_w \subset B^+_C, \quad 0\in \p \Omega_w, $$
and $w=\varphi_w$ on $\p \Omega_w$ with
$$\varphi_w=1 \quad \text{on $\p \Omega_w \setminus G_w$},$$

$$\tilde \mu \sum \beta_i^2 x_i^2 \leq \varphi_w \leq \min \{1, \, \tilde \mu^{-1} \sum \beta_i^2 x_i^2\} \ \quad \text{on $G_w$}, $$ where $G_w:=h^{-1/2}A^{-1}\tilde G$.

This implies that $$w \in \mathcal{D}^{\bar \mu}_0(\beta_1, \beta_2, \ldots, \beta_k, \infty, \ldots, \infty)$$ for some value $\bar {\mu}$ depending on $\mu, M, \lambda, \Lambda, n,k$.

We claim that $$b_u(h) \ge c_\star.$$ First we notice that $$b_u(h)=b_{\tilde u}(h) \sim \theta_n.$$

Since
$$\theta_n \prod \beta_i \prod \gamma_j  =\det A=1$$ and $$\gamma_j \leq C,$$ we see that if $b_u(h)$ (and therefore $\theta_n$) becomes smaller than a critical value $c_*$ then
$$\beta_k \geq C_k(\bar \mu, \bar M, \lambda, \Lambda,n),$$
with $\bar M:=2 \bar \mu^{-1}$, and by the induction hypothesis
$$b_w(\tilde h) \geq \bar M \ge 2 b_w(1)$$ for some $\tilde h > C_k^{-1}$. This gives
$$\frac{b_u(h \tilde h)}{b_u(h)}=\frac{b_w(\tilde h)}{b_w(1)} \ge 2,$$
which implies $b_u(h \tilde h) \ge 2 b_u(h)$ and our claim follows.

Next we claim that $\gamma_j$ are bounded below by the same argument. Indeed, from the claim above $\theta_n$ is bounded below and if some $\gamma_j$ is smaller than a small value $\tilde c_*$ then
$$\beta_k \geq C_k(\bar \mu, \bar M_1,\lambda, \Lambda,n)$$ with $$\bar M_1:=\frac{2M}{\bar \mu c_\star}.$$ By the induction hypothesis $$b_w(\tilde h) \geq \bar M_1 \geq \frac{2M}{c_\star} b_w(1),$$ hence
$$\frac{b_u(h \tilde h)}{b_u(h)} \geq \frac{2M}{c_\star}$$ which gives $b_u(h \tilde h) \ge 2M$, contradiction.
In conclusion $\theta_n$, $\gamma_j$ are bounded below which implies that $\beta_i$ are bounded above. This shows that $\|A\|$ is bounded and the lemma is proved.

\end{proof}

Next we use the lemma above and show that the function $u$ has the following property.

\begin{lem}\label{lastlem}If for some $p, q>0$, $$u \geq p(|z| - q x_n), \quad \quad q \le q_0$$ then
$$u \geq p'(|z| - (q-\eta) x_n)$$ for some $p' \ll p,$ and with $\eta>0$ depending on $q_0$ and $\mu, M, \lambda, \Lambda, n,k$.  \end{lem}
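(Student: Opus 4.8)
The plan is to iterate the barrier construction, gaining a definite improvement $\eta$ in the slope of the linear lower bound at each step, using Lemma~\ref{ball} to keep the geometry of the sections under control. First I would fix a section level $h$ and pass to the normalized picture: by Lemma~\ref{ball} the rescaled function $w(x)=h^{-1}\tilde u(h^{1/2}Ax)$ lives on a domain pinched between two half-balls $B_c(x_0)\subset\Omega_w\subset B_C^+$, with the boundary data equal to $1$ off a graph piece $G_w\subset\{x_i=0,\ i>k\}\cap\{x_n\le\sigma_w\}$ and quadratically controlled by $\sum_1^k\beta_i^2x_i^2$ on $G_w$; crucially the $z=(x_{k+1},\dots,x_{n-1})$ and $x_n$ directions are ``free'' on $G$ in the sense that $G$ sits in $\{z=0\}$. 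The hypothesis $u\ge p(|z|-qx_n)$ transfers (up to harmless constants in $p,q$) to the same inequality for $w$, and the inclusion \eqref{HD3star} shows $\Omega_w$ is thin in the $(z,x_n)$ directions near the origin.

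Next I would build an explicit subsolution for $w$ of the same flavor as in Proposition~\ref{2dprop} and Lemma~\ref{base}, but now in the $(z,x_n)$ half-space with the extra obstacle coming from the given lower bound. Concretely, working in polar-type coordinates $(r,\theta)$ in the half-plane spanned by $|z|$ and $x_n$ (so $r=\sqrt{|z|^2+x_n^2}$, $\theta\in[0,\pi/2]$ the angle from $\{x_n=0\}$), I would take
$$v:=r^{1+\beta}f(\theta)+\text{(small quadratic in the remaining variables)},$$
or more simply a function of the form $rf(\theta)$ with $f$ exponentially decaying in $\theta$, chosen so that: (a) on $\p\Omega_w$ it lies below $\varphi_w$ — this uses that $\varphi_w=1$ away from $G_w$ and that on $G_w$ we have $z=0$, so $v|_{G_w}$ is controlled by the behavior near $\theta=\pi/2$; (b) $\det D^2v>\Lambda$ wherever $v$ exceeds the obstacle $p(|z|-qx_n)$, which as in \eqref{stella} reduces to a one-variable ODE inequality $f''+f\ge c\Lambda f$ solved by a steep exponential; and (c) $v\le p'(|z|-(q-\eta)x_n)$ fails only in a region where $v$ lies below the \emph{old} obstacle, so that on the whole domain $\max\{v,\,p(|z|-qx_n)\}$ is a genuine subsolution with boundary data below $\varphi_w$. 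Comparison then yields $w\ge v$, hence $w\ge p''(|z|-(q-\eta)x_n)$ near the origin for a new $\eta>0$ and small $p''$, and unwinding the rescaling gives the claimed $u\ge p'(|z|-(q-\eta)x_n)$.

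The main obstacle is arranging the geometry so that the subsolution $v$ genuinely sits below $\varphi_w$ on \emph{all} of $\p\Omega_w$ while still improving the slope: the danger is that $\Omega_w$ pokes out in the $x_n$ direction (it is only a half-ball, not flat) so that the steep cone $rf(\theta)$ becomes too large near $\theta$ slightly less than $\pi/2$. This is exactly why the hypothesis $q\le q_0$ is needed — it guarantees the obstacle $p(|z|-qx_n)$ already forces the relevant part of $\Omega_w$ into a thin wedge around $\{x_n=0\}$, so one only has to beat $\varphi_w$ on a controlled angular range, and the improvement $\eta$ can be taken proportional to $q_0$ (times a constant depending on $\mu,M,\lambda,\Lambda,n,k$) rather than to $q$ itself. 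A secondary technical point is bookkeeping the constants under the rescaling of Lemma~\ref{ball}, since $p'$ must be allowed to degenerate ($p'\ll p$) — this is harmless because the statement only needs the \emph{existence} of such $p'$, and each application of the lemma will later be composed finitely many times to push $q$ all the way down to $0$. I would end by remarking that iterating Lemma~\ref{lastlem} finitely many times removes the $x_n$ term entirely, giving $u\ge p^\star|z|$ near $0$, which combined with $u\ge\varepsilon x_n$ (from the tangent-plane normalization and the barriers already constructed) forces $|S_h|\le Ch^{n/2}\cdot(\text{extra decay})$, contradicting \eqref{HD3}; this is the mechanism by which Lemma~\ref{lastlem} will be used to finish Proposition~\ref{HDprop}.
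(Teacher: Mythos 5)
Your overall scaffolding matches the paper's: rescale via Lemma~\ref{ball} to put the section into a normalized domain $\Omega_w$, construct an explicit subsolution $v$ that lies below the boundary data $\varphi_w$, deduce $w\ge v$ by comparison, and extract the improved linear lower bound. The key difference is the barrier. You propose a cone-type function $rf(\theta)$ in the $(|z|,x_n)$ plane with an exponential profile $f$, modeled on the barriers used for Proposition~\ref{2dprop} and Lemma~\ref{base}. The paper instead uses a much simpler quadratic-plus-linear barrier
$$v := \delta |x|^2 + \frac{\Lambda}{\delta^{n-1}}(z_1-qx_n)^2 + N(z_1-qx_n) +\delta x_n,$$
with $\delta$ small and $N$ chosen so that $\tfrac{\Lambda}{\delta^{n-1}}t^2+Nt$ is increasing on $|t|\le(1+q_0)C_0$. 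This choice makes every step trivial: $\det D^2v=(2\delta)^{n-1}\cdot(\,\cdot\,)>\Lambda$ with no ODE analysis, the boundary inequality is checked by splitting $\p\Omega_w$ into $\{z_1\le qx_n\}$ (where the $t$-terms are $\le 0$) and $\{z_1>qx_n\}$ (where $t=z_1-qx_n$ is forced small by the hypothesis once $h\ll p$), and — most importantly — the improvement is read off directly: $w\ge N(z_1-qx_n)+\delta x_n = N\bigl(z_1-(q-\tfrac\delta N)x_n\bigr)$, then rotate in $z$ to get $w\ge N(|z|-(q-\eta)x_n)$ with $\eta=\delta/N$ a fixed constant depending only on $q_0,\mu,M,\lambda,\Lambda,n,k$.

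If you insist on the cone barrier, several details need attention that your sketch leaves unresolved. First, $rf(\theta)$ with $r=|(z,x_n)|$ is degree-one homogeneous in $(z,x_n)$, so its Hessian has a null radial direction; you must add a term like $\tfrac{1}{2M^2}x_n^2$ (which is available from the hypothesis $b(h)<M$, giving $u\ge M^{-2}x_n^2$) to make the determinant nonzero, exactly as in Lemma~\ref{base}. You also need a strictly convex quadratic in the $y$-variables, and then the Hessian block-decomposes and the determinant is a product; none of this is automatic from writing ``$r^{1+\beta}f(\theta)$ + small quadratic.'' Second, your step (c) is misstated: what you actually need is that $\max\{v,\,p(|z|-qx_n)\}\ge p'(|z|-(q-\eta)x_n)$ everywhere, which is \emph{not} implied by ``$v<$ new target $\Rightarrow v<$ old obstacle.'' The correct reduction is that on the wedge $\{|z|\lesssim q x_n\}$ the cone gives $v\ge\nu x_n$ with $\nu=\min f>0$, so the target $\approx p'\eta x_n$ is beaten provided $p'\lesssim\nu/\eta$; this is why $p'\ll p$ is allowed. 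Third, your heuristic that $\eta$ is proportional to $q_0$ is backwards: in the paper's computation $N\sim(1+q_0)C_0/\delta^{n-1}$, so $\eta=\delta/N$ scales like $1/(1+q_0)$, i.e.\ a smaller gain for larger $q_0$, which is the expected behavior. None of these points is fatal to your approach, but they are exactly the complications that the paper's quadratic barrier is designed to avoid.
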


\begin{proof} From Lemma \ref{ball} we see that after performing a linear transformation $T$ (siding along the $y$ direction) we may assume that
$$S_h \subset C_0h^{1/2}B_1.$$

Let $$w(x):= \frac{1}{h} u(h^{1/2} x)$$ for some small $h\ll p.$ 

Then $$S_1(w) := \Omega_w=h^{-1/2} S_h \subset B^+_{C_0}$$ and our hypothesis becomes
\begin{equation}\label{w} w \geq \frac{p}{h^{1/2}} (|z| -q x_n),
 \end{equation} Moreover the boundary values $\varphi_w$ of $w$ on $\p \Omega_w$ satisfy
 $$\varphi_w=1 \quad \text{on $\p \Omega_w \setminus G_w$}$$
 $$\tilde \mu |y|^2 \le \varphi_w \le \min\{1,\tilde \mu^{-1}|y|^2\} \quad \mbox{on} \quad G_w,$$
where $G_w:=h^{-1/2}\{\varphi \le h\}$.

Next we show that $\varphi_w \ge v$ on $\p \Omega_w$ where $v$ is defined as
$$v := \delta |x|^2 + \frac{\Lambda}{\delta^{n-1}}(z_1-qx_n)^2 + N(z_1-qx_n) +\delta x_n, $$ and $\delta$ is small depending on $\tilde \mu$ and $C_0$, and $N$ is chosen large such that $$\frac{\Lambda}{\delta^{n-1}} t^2 + Nt$$ is increasing in the interval $|t|\le (1+q_0)C_0.$

From the definition of $v$ we see that $$\det D^2v > \Lambda.$$

On the part of the boundary $\p \Omega_w $ where $z_1 \le qx_n$ we use that $\Omega_w \subset B_{C_0}$ and obtain
$$v \le \delta (|x|^2+x_n) \le \varphi_w.$$

On the part of the boundary $\p \Omega_w $ where $z_1 > qx_n$ we use \eqref{w} and obtain
$$1=\varphi_w \geq C(|z| -q x_n) \ge C(z_1-qx_n)$$ with $C$ arbitrarily large provided that $h$ is small enough. We choose $C$ such that
the inequality above implies $$\frac{\Lambda}{\delta^{n-1}}(z_1-qx_n)^2 + N(z_1 -qx_n) <\frac 1 2.$$ Then
$$\varphi_w=1 > \frac 12 + \delta (|x|^2+x_n) \ge v.$$

In conclusion $\varphi_w \ge v$ on $\p \Omega_w$ hence the function $v$ is a lower barrier for $w$ in $\Omega_w$. Then
$$w \geq N(z_1 -qx_n)+\delta x_n$$ and, since this inequality holds for all directions in the $z$-plane, we obtain  $$w \geq N(|z| -(q-\eta)x_n), \quad \quad \eta := \frac \delta N.$$ Scaling back we get $$u \ge p'(|z| -(q-\eta)x_n) \quad \quad \mbox{in $S_h$}.$$ Since $u$ is convex and $u(0)=0$, this inequality holds globally, and the lemma is proved.

\end{proof}

We remark that Lemma \ref{lastlem} can be used directly to prove Proposition \ref{2dprop} and Lemma \ref{base}.

\

{\it End of the proof of Proposition \ref{HDprop}.}
From \eqref{HD2star} we obtain an initial pair $(p,q_0)$ which satisfies the hypothesis of Lemma \ref{lastlem}. We apply this lemma a finite number of times and obtain that $$u \ge \eps(|z|+x_n),$$ and we contradict that $\tilde S_h$ is equivalent to a ball of radius $h^{1/2}.$

\qed

\end{document}